\documentclass[11pt]{amsart}
\usepackage{amssymb,latexsym}
\usepackage[utf8]{inputenc}
\usepackage{amsmath,amsthm}
\usepackage{amsfonts,mathrsfs}
\usepackage{tikz}
\usetikzlibrary{arrows.meta}
\usepackage[margin=1in]{geometry}
\usepackage{float}
\usetikzlibrary{calc}

\usepackage{enumerate,units}
\usepackage[all]{xy}
\usepackage{graphicx}

\usepackage{hyperref, url}
   
\theoremstyle{plain}
\newtheorem{theorem}{Theorem}[section]
\newtheorem{corollary}[theorem]{Corollary}
\newtheorem{lemma}[theorem]{Lemma}
\newtheorem{proposition}[theorem]{Proposition}

\newtheorem{fact}[theorem]{Fact}
\newtheorem*{claim}{Claim}
\newtheorem*{theorem*}{Theorem}
\newtheorem*{proposition*}{Proposition}

\theoremstyle{definition}
\newtheorem{definition}[theorem]{Definition}
\newtheorem{example}[theorem]{Example}

\theoremstyle{remark}
\newtheorem{remark}[theorem]{Remark}

\newtheorem{question}[theorem]{\textbf{Question}}

\numberwithin{equation}{section}
\newcommand{\forkindep}[1][]{%
  \mathrel{
    \mathop{
      \vcenter{
        \hbox{\oalign{\noalign{\kern-.3ex}\hfil$\vert$\hfil\cr
              \noalign{\kern-.7ex}
              $\smile$\cr\noalign{\kern-.3ex}}}
      }
    }\displaylimits_{#1}
  }
}

\newenvironment{claimproof}[1][\proofname]
  {%
    \proof[#1]%
  }
  {%
    \endproof%
  }

\newcounter{step}                   
    {\hfill $\clubsuit$             
     \vspace{7pt}\par}

\DeclareMathOperator{\acl}{acl}

\DeclareMathOperator{\tp}{tp}

\begin{document}
\title{Model-theoretic Tameness in finite extensions of groups}

\author{Yatir Halevi}
\address{Faculty of Mathematics\\Technion - Israel Institute of Technology\\Haifa\\ Israel.}
 \email{yatirh@math.technion.ac.il}

\author{Saharon Shelah}
\address{Einstein Institute of Mathematics\\Hebrew University of Jerusalem\\91904\\Jerusalem\\
Israel.}
\email{shelah@math.huji.ac.il}

\thanks{The second author thanks ISF 2320/23: The Israel Science Foundation (ISF) (2023-2027) for partially supporting this research. Paper 1274 in the publication list of Shelah.}

\begin{abstract}
It is shown that finite-index extensions and finite-index subgroups of $\omega$-stable groups can be model-theoretically wild. More precisely, there exists an $\omega$-stable group $G$ such that any given countable first-order structure in a finite language is interpretable both in some finite-index extension of $G$ and in some finite-index subgroup of $G$.
\end{abstract}

\maketitle
\section{Introduction}
A basic and recurring structural question in group theory is: Which properties are inherited or transferred in finite group extensions?

In the case of model-theoretic stability, the question is:
\begin{question}
    Let $G$ be an infinite group, and let $H\leq G$ be a subgroup of finite index. Is stability preserved when passing between $G$ and $H$? More precisely, if $G$ is stable, must $H$ be stable? Conversely, if $H$ is stable, must $G$ be stable?
\end{question}
One can, of course, ask this for any model-theoretic tameness notion.

The direction of transferability of stability up in finite extensions is known for abelian groups: abelian-by-finite groups are stable (folklore, essentially due to \cite{BaChMa}). The general question in this direction goes back at least to \cite{Meirembekov}. A counterexample was given by Simon Thomas and was communicated by Baldwin in \cite[Example 3.2]{baldwin}: The semidirect product of $\mathrm{SL}_2(\mathbb{C})$ and $\mathbb{Z}/2\mathbb{Z}$ under the action of complex conjugation.\footnote{We thank Udi Hrushovski for independently pointing this example out to us.}


We answer the general question negatively. The situation can be made as bad as possible with respect to model-theoretic tameness.

\begin{theorem}[Proposition \ref{P: down} and \ref{P: up}]\label{T:main}
    Let $p$ be an odd prime. There exists an $\omega$-stable group $H$ of nilpotency class $2$ and exponent $p$ such that for any first-order countable structure $M$ in a finite  language:
    \begin{enumerate}
        \item   There is a supergroup $G\supseteq H$ satisfying that $[G:H]=2$ and that $(G,\cdot)$ interprets $M$.

        \item There is a normal subgroup $K\trianglelefteq H$ of index $p$ such that $(K,\cdot)$ interprets $M$.
    \end{enumerate}
\end{theorem}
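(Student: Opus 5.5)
The plan is to use Mekler-type constructions: nilpotent class 2, exponent $p$ groups built from graphs (or bilinear maps). Since every countable structure in a finite language is interpretable in a countable graph, we fix $M$ and a graph $\Gamma$ interpreting $M$. Throughout, $H$ is a fixed $\omega$-stable group, e.g.\ the free nilpotent class $2$ exponent $p$ group on countably many generators, or a saturated version of it. Concretely, take $V$ an infinite-dimensional $\mathbb{F}_p$-vector space and $H=V\times\bigwedge^2 V$ with product $(v,a)(w,b)=(v+w,\,a+b+\tfrac12 v\wedge w)$; this is $\omega$-stable, being interpretable in an $\omega$-stable module-theoretic structure (alternatively, Baudisch's analysis applies). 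The idea is that $H$ is completely homogeneous, but a small perturbation reintroduces the rigidity of Mekler groups. Then $\Gamma$, and hence $M$, is interpreted.

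For (2), we choose a linear functional $f\colon\bigwedge^2 V\to\mathbb{F}_p$ encoding $\Gamma$: fix a basis $(e_i)$ of $V$ indexed by the vertices, and let $f(e_i\wedge e_j)=1$ if $i\sim j$, and $0$ otherwise. Set $K=\{(v,a): f(a)=0\}$. This is a normal subgroup of index $p$, since it contains $V\times 0$ with $v\wedge w$ adjusted... Here a correction is needed: $K$ must be a subgroup, and commutators land in $\ker f$ only if the $V$-part is restricted too. So I would instead take $K=H'$-central quotient data differently. Let $K=\{(v,a): \ell(v)=0\}$ for some functional $\ell$ on $V$, and choose $\ell$ and a presentation of $H$ so that $K$ is isomorphic to a Mekler-style group $V'\times(\bigwedge^2V'/R)$. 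The key step is then to show that $K$, being a restriction of the free object, still has center and commutator structure definably recovering $\Gamma$. This requires choosing $H$ not free but a "generic" group in which the graph is coded as a substructure, and that choice is exactly the delicate part.

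For (1), we let $G=H\rtimes\langle\sigma\rangle$, where $\sigma$ is an involution acting on $V$ with eigenspaces $V^+,V^-$. Since $p$ is odd, this decomposition is available. The action on $\bigwedge^2V$ is induced. Then the centralizer $C_H(\sigma)$ and the set $\{h: h^\sigma=h^{-1}\}$ are definable in $G$ with the parameter $\sigma$. The commutator map restricted to $V^+\times V^-$ gives a bilinear map into $(\bigwedge^2V)^-$. Choosing $\sigma$ so that the induced bilinear structure codes $\Gamma$, in a Mekler-style way, we interpret $\Gamma$ by the standard Mekler argument: recover "vertex" elements as those with minimal centralizers.

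The main obstacle is to make the bilinear maps visible in $G$ (or $K$) encode an arbitrary graph, while $H$ itself stays uniform and $\omega$-stable. I expect the actual $H$ to be a saturated model of the theory of free class-$2$ exponent-$p$ groups. In such a model, any countable alternating bilinear configuration embeds, so the needed $\sigma$ or the index-$p$ subgroup exists by saturation. The interpretability argument is then Mekler's, applied to the substructure singled out by $\sigma$ or $K$.
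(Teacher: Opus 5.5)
Your outline has the right shape for the extensions themselves, and the paper uses the same one. An index-$p$ normal subgroup of $H$ is necessarily $\pi^{-1}(\ker\ell)$ for a functional $\ell$ on $H/Z(H)$, and an index-$2$ supergroup of an exponent-$p$ group splits as $H\rtimes_\sigma C_2$.

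There is, however, a genuine gap in the choice of $H$, which must be fixed before $M$. So you cannot, as you suggest at one point, choose $H$ so that it codes $\Gamma$. Your main candidate, the countable free class-$2$ exponent-$p$ group $H=V\times\bigwedge^2V$, provably fails:
\begin{enumerate}
\item Since $H$ is relatively free, $\mathrm{GL}(V)$ lifts to $\mathrm{Aut}(H)$. All hyperplanes of $V$ are $\mathrm{GL}(V)$-conjugate, so every index-$p$ normal subgroup is isomorphic to $F(V')\times E\cong H\times E$ with $E$ central elementary abelian. It is therefore no wilder than $H$.
\item Since $p$ is odd, the kernel $\mathrm{Hom}(V,Z(H))$ of $\mathrm{Aut}(H)\to\mathrm{GL}(V)$ has trivial $H^1$ under $C_2$. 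So every involutive automorphism is conjugate to the lift of a $\pm1$-diagonal map, determined by $(\dim V^+,\dim V^-)$.
\item Hence, up to isomorphism, there are only countably many candidate $G$ and one candidate $K$. Each is countable and so interprets only countably many countable structures, whereas there are $2^{\aleph_0}$ countable graphs.
\end{enumerate}
Concretely, the map $V^+\times V^-\to(\bigwedge^2V)^-$ you hope to exploit is the universal bilinear map and codes nothing. Likewise ``vertices as elements with minimal centralizers'' is vacuous, since $C_H(g)=\langle g\rangle Z(H)$ for every non-central $g$.

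The fallback to a saturated model is not an argument. The needed $\ell$ or $\sigma$ is not definable, so saturation gives no control over what $K$ or $G$ can define, and you never say how the edge relation would be recovered. Separately, $\omega$-stability ``via modules'' is unjustified, because the group law involves the bilinear map $\wedge$.

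The missing idea is a rigid $H$ that already contains every potential edge, with $\ell$ or $\sigma$ merely selecting the actual ones. The paper takes $H=H(A)$, Mekler's group of an explicit $\omega$-stable nice graph $A$. It has base vertices $n\in\mathbb{N}$, for each pair $\{a,b\}$ a connector $(\{a,b\},0)$ adjacent to $a$ and $b$, and a $5$-cycle attached at each connector. Niceness makes vertex-like elements definable, and the centralizer-dimension bounds single out the base vertices $x_n$.

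The only non-central elements of $H$ centralizing both $x_n$ and $x_m$ are the $x_{(\{n,m\},0)}^\alpha z$. For (2), $\ell_R$ vanishes on the base vertices and on the connectors of pairs in $R$. So $x_n$ and $x_m$ have a common non-central centralizing element in $K$ if and only if $\{n,m\}\in R$. For (1), $\pi_R$ reflects the pentagons of pairs in $R$. Then $\{n,m\}\in R$ if and only if the connector commutes with some vertex-like element that conjugation by the involution of $G$ moves modulo $Z(H)$.

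Without such gadgets built into a fixed $H$, your $\ell$ and $\sigma$ have nothing to act on.
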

\begin{remark}
\begin{enumerate}
    \item This implies that the folklore result on the stability of abelian-by-finite groups is strict in the following sense. Replacing ``abelian'' by ``solvable'' does not guarantee stability (this is already implied by a variant of Simon Thomas's counterexample). As for the other direction, if $H$ is a perfect group with bounded commutator length and $G/H$ is finite abelian (or even solvable) then it is easy to see that $H$ is definable in $G$. 
    
    \item The countability assumption is not essential here. One can find appropriate groups to interpret structures of large cardinality as well.
\end{enumerate}
\end{remark}

The group is produced by applying Mekler's construction \cite{Mek:stability}.  The desired supergroup is obtained by producing an automorphism, and the subgroup is obtained with the aid of an appropriate linear functional on the $\mathbb{F}_p$-vector space $H/Z(H)$.

In the appendix we prove that certain classical groups are not superstable. Although the results there follow from \cite[Corollary 1.3]{superstablereisdual} where it is shown that a finitely generated residually finite group is superstable if and only if it is abelian-by-finite, we think that the proof presented here might be of independent interest. The proof is an abstraction of Poizat's proof that the free group is not superstable \cite{Poi:FreeNSuperstable}.

All our model-theoretic notation and notions are standard; the reader may consult \cite{Hodges} when needed. Unless specified otherwise, by ``definable'' and ``interpretable'' we mean with parameters.

\vspace{0.2cm}
\noindent\emph{\textbf{Acknowledgement.}} We would like to thank Chen Meiri and Gregory Cherlin for useful conversations and suggestions and Immi Halupczok
for suggesting a simplification of the graph.

\section{Mekler's Construction}
We review the basic properties of Mekler groups we will need.

\begin{definition}\cite[Definition 1.1]{Mek:stability}
    A \emph{nice graph} (in the sense of Mekler) is a graph $\Gamma=(V,E)$ satisfying:
    \begin{enumerate}
        \item The graph has at least two vertices and for every two distinct vertices $v,u\in V$ there is some vertex, not equal to $v$ or $u$, that is connected to $v$ but not to $u$.
        \item There are no triangles or squares in the graph.
    \end{enumerate}
\end{definition}

By \cite[Definition 2.1]{Mek:stability} (see also \cite[Appendix A.3]{Hodges}) to each nice graph $\Gamma$  and odd prime $p$, one associates a nilpotent group of class $2$ and  exponent $p$,  $H=H(\Gamma)$. It is the free group in the variety of nilpotent groups of class $2$ and  exponent $p$ generated by $\{x_v: v\in V\}$ whose only relations impose that $x_v$ and $x_{v'}$ commute if and only if $(v,v')\in E$.

Note that the definition readily implies that every automorphism of the graph $\Gamma$ induces an automorphism of the group $H$.

We review some important properties of the group; we will use them without further reference.

\begin{fact}\label{F:properties of Mekler group}\cite[Appendix A.3]{Hodges}
    \begin{enumerate}
        \item $Z(H)=[H,H]$.
        \item $Z(H)$ is an elementary abelian $p$-group (so a $\mathbb{F}_p$-vector space) with basis $\{[x_v,x_u]: v<u\in V,\, (v,u)\notin E\}$, for some fixed linear order $<$ on $V$
        \item $H/Z(H)$ is an elementary abelian $p$-group  with basis $\{x_vZ(H):v\in V\}$.
    \end{enumerate}
\end{fact}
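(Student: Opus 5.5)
The three claims follow from the normal form for elements of the relatively free group in the variety $\mathfrak{N}_{2,p}$ of nilpotent groups of class $2$ and exponent $p$, together with an analysis of centralizers of generators. Let $F$ be the free group of this variety on $\{x_v:v\in V\}$, for $p$ odd. Its standard structure is as follows. $F/[F,F]$ is elementary abelian with basis the images of the $x_v$. The group $[F,F]=Z(F)$ is elementary abelian with basis $\{[x_v,x_u]:v<u\}$. One gets this from an explicit model: take $U=\mathbb{F}_p^{(V)}$ and set $F=U\times\Lambda^2U$ with multiplication $(a,\alpha)(b,\beta)=(a+b,\alpha+\beta+\tfrac12 a\wedge b)$. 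This makes sense because $p$ is odd, and one checks that this group has the universal property.

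The group $H$ is the quotient of $F$ by the normal subgroup $N$ generated by $\{[x_v,x_u]:(v,u)\in E\}$. These commutators are central, so $N$ is just their $\mathbb{F}_p$-span inside $\Lambda^2U$. Hence
\[
H\cong U\times\bigl(\Lambda^2U/\langle e_v\wedge e_u:(v,u)\in E\rangle\bigr)
\]
with the induced multiplication. It follows directly that $[H,H]$ is the image of $\Lambda^2U$, which is elementary abelian with basis $\{[x_v,x_u]:v<u,\ (v,u)\notin E\}$. This gives (2) once (1) is established, since then $Z(H)=[H,H]$. It also gives $H/[H,H]\cong U$ with basis $\{x_v[H,H]\}$, which yields (3) under the same identification.

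The main obstacle is (1), specifically the inclusion $Z(H)\subseteq [H,H]$; the reverse inclusion holds because $H$ has class $2$. Take $g=(a,\alpha)$ central. The commutator is $[g,x_w]=a\wedge e_w$ modulo the edge relations. Write $a=\sum_v c_v e_v$, so that $a\wedge e_w=\sum_v c_v\, e_v\wedge e_w$. Modulo the edge span, the terms with $v=w$ or $(v,w)\in E$ vanish. The remaining terms $e_v\wedge e_w$, for $v\ne w$ and $(v,w)\notin E$, are distinct basis vectors of the quotient.

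Therefore centrality forces the following: for every $w$ and every $v$ with $c_v\neq0$, either $v=w$ or $(v,w)\in E$. Now suppose some $c_v\ne0$. The first niceness condition (applied to $v$ and any other vertex, which exists since $|V|\ge2$) gives a vertex $w\neq v$. So $v$ is adjacent to every other vertex. Then for any $u\neq v$, every vertex $w\notin\{u,v\}$ is adjacent to $v$. Hence no such $w$ is connected to $u$ but not to $v$, contradicting niceness. So $a=0$, and $g\in[H,H]$.

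The absence of triangles and squares is not needed for these three facts. It matters only for Mekler's later definability results.
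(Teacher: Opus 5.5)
Your proof is correct. The paper gives no argument of its own for this Fact and simply cites Hodges, Appendix A.3, so your proposal supplies a self-contained proof where the paper has none.

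\textbf{How your route differs.} The usual treatment manipulates collected normal forms $\prod_v x_v^{a_v}\prod_{v<u}[x_v,x_u]^{c_{vu}}$ in the relatively free group. You instead work in the explicit model $U\times\Lambda^2U$ with cocycle $\tfrac12\,a\wedge b$, which is legitimate since $p$ is odd. This buys three things:
\begin{enumerate}
\item $N$ is visibly the $\mathbb{F}_p$-span of the edge wedges, so the basis in (2) and the isomorphism $H/[H,H]\cong U$ in (3) become pure linear algebra.
\item The centre computation reduces to one bilinearity check against the generators.
\item It isolates exactly which hypothesis is used. Items (2) and (3) hold for every graph. $Z(H)=[H,H]$ holds precisely when no vertex is adjacent to all other vertices: for a star with centre $c$, $x_c$ is central but nonzero in $H/[H,H]$. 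This is the only consequence of the first niceness condition you need, and your remark that triangle- and square-freeness play no role is right.
\end{enumerate}

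\textbf{Minor presentational points.}
\begin{enumerate}
\item The universal property of the model is only asserted. The quickest justification: the abstract relatively free group maps onto your model, and the collected normal form shows this map is injective.
\item For $v>w$ the vector $e_v\wedge e_w$ is minus a basis vector of the quotient, not a basis vector. This does not affect the independence argument.
\item The existence of some $w\neq v$ comes from $|V|\ge 2$, not from niceness. Niceness enters only in the following step, where you rule out a vertex adjacent to all others.
\end{enumerate}
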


\begin{fact}\label{F: stability}\cite[Theorem 2.12]{Mek:stability}
    The stability spectrum of $\Gamma$ (as a graph) is equal to that of $H=H(\Gamma)$ (as a group).
\end{fact}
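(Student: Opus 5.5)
The plan is to prove the two inequalities between the stability spectra separately, using mutual interpretability-type arguments between $\Gamma$ and $H=H(\Gamma)$. The easy direction is: if $H$ is $\lambda$-stable then so is $\Gamma$. For this I would show that $\Gamma$ is interpretable in $H$. Counting types over a set of size $\lambda$ in an interpreted structure is bounded by counting types in the ambient structure, so $\lambda$-stability transfers down.

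The interpretation uses the niceness conditions. First I would single out, by a first-order formula, the ``vertex-like'' elements $x\in H\setminus Z(H)$. These are the elements whose centralizer is, in a suitable definable sense, as large as possible. For $x=x_v$ the centralizer is generated by $x_v$, the neighbours $x_u$ of $v$, and $Z(H)$. The absence of triangles and squares should guarantee that an element involving two or more generators modulo $Z(H)$ (unless they are arranged in a very special way) has a strictly smaller centralizer. The first niceness condition (a vertex connected to $v$ but not to $u$) should guarantee that distinct vertices give non-comparable centralizers. I would then take the quotient by the definable equivalence relation $x\sim y \iff C(x)=C(y)$. On these classes I would define the edge relation by ``representatives commute''. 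In $H$ itself this recovers $\Gamma$, possibly up to a definable perturbation that I would have to remove by hand, for instance classes coming from special elements like $x_v^a x_u^b z$ with $u$ a leaf.

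The hard direction is: if $\Gamma$ is $\lambda$-stable then so is $H$. Here one cannot simply interpret $H$ in $\Gamma$, because $H$ carries the $\mathbb{F}_p$-linear structure of $Z(H)$ and of $H/Z(H)$. Instead I would analyse types directly. Work in a saturated $\mathcal{H}\models \mathrm{Th}(H)$ with interpreted graph $\Gamma^*$. Choose a set of representatives of the vertex classes, and extend it to a ``basis'' of $\mathcal{H}$ modulo $Z(\mathcal{H})$. The point is that $\mathcal{H}$ is no longer literally $H(\Gamma^*)$: there may be nonstandard elements not in the span of vertex-like elements. One must show that these are still controlled, for example that they behave freely (transcendentally) over the vertex part.

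The key claim to prove is then a normal-form statement. For $A\subseteq\mathcal{H}$, the type of an element $g$ over $A$ is determined by three pieces of data:
\begin{enumerate}
\item the type in $\Gamma^*$, over the graph-part of $\acl(A)$, of the finitely many vertices occurring in $g$ modulo $Z$;
\item the linear-algebra data, namely the coefficients of $g$ and the position of $g$ relative to the $\mathbb{F}_p$-spans of $A$ in $\mathcal{H}/Z$ and in $Z$;
\item the commutator relations with $A$, which are computed bilinearly from these.
\end{enumerate}
Since vector spaces over $\mathbb{F}_p$ are $\omega$-stable and contribute only $\lambda$ types over a set of size $\lambda$, this bounds the number of types in $\mathcal{H}$ over $A$ by $\lambda$ times the number of graph types, giving $\lambda$-stability.

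I expect the main obstacle to be this normal-form and type-determination step. It requires a back-and-forth argument showing that partial isomorphisms which respect the graph type and the linear data extend. The delicate points are handling the non-vertex-like elements in the saturated model and verifying that the commutator map imposes no further hidden relations. I would first set up the back-and-forth system between finitely generated ``closed'' subgroups, meaning those generated by vertex representatives, a free part, and central elements, and check the extension property one generator at a time.
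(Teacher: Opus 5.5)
\textbf{There is a genuine gap: the hard direction ($\lambda$-stability of $\Gamma$ implies $\lambda$-stability of $H$) is only a plan, and the type-determination claim it rests on is false as stated.}

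The paper gives no proof of this Fact; it quotes Mekler, whose argument has the same two-part shape as yours: interpret $\Gamma$ in $H$, then count types in models of $\mathrm{Th}(H(\Gamma))$. Your easy direction is fine. The leaf worry is vacuous, since condition (1) of niceness rules out vertices of degree at most $1$ (apply it to such a vertex and its unique neighbour, or to an isolated vertex and any other vertex).

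First, central elements carry graph information that your data (1)--(3) do not see.
\begin{itemize}
\item In the paper's graph, let $g_1=[x_n,x_m]$ with $n\neq m$ in $\mathbb N$; these vertices are at distance $2$ via $(\{n,m\},0)$.
\item Let $g_2=[x_n,x_s]$ with $s=(\{a,b\},1.25)$.
\item Both are central, so (1) is empty and (3) is trivial. Over $\emptyset$ both are single basic commutators, so (2) agrees.
\item Yet $\exists u\,\exists u'\,\exists t\,\bigl(u,u',t\text{ vertex-like}\wedge x=[u,u']\wedge[t,u]=[t,u']=e\bigr)$ holds of $g_1$ and fails of $g_2$. This is because $[x_r^\alpha z,x_q^\beta z']=[x_r,x_q]^{\alpha\beta}$ and the basic commutators form a basis of $Z(H)$.
\end{itemize}
So the vertices whose $\Gamma^*$-type matters include those occurring in the \emph{central} coordinates of $g$. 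Moreover, in a saturated $\mathcal H$ the centre is strictly larger than the subgroup generated by commutators. Indeed, $\prod_{i\le N}[x_{v_i},x_{w_i}]$ with $2N$ pairwise non-adjacent vertices is not a product of fewer than $N$ commutators, by a rank count in $\Lambda^2$. Hence a free central part must be split off as well.

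Second, the part of $\mathcal H$ outside the vertex span is not free over it, contrary to your suggestion. Since $n\in\mathbb N$ has infinitely many neighbours, compactness applied to $x_{u_1}\cdots x_{u_k}$ (with $u_i$ distinct neighbours of $n$) gives $g\in C_{\mathcal H}(x_n)$ outside the subgroup generated by $Z(\mathcal H)$ and the vertex-like elements. Over $\emptyset$, your data cannot distinguish such a ``nonstandard neighbour'' from a nonstandard element commuting with no vertex-like element, although the two have different types.

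What is missing is the substance of Mekler's argument: a structure theory for arbitrary models of $\mathrm{Th}(H(\Gamma))$. It has two parts.
\begin{enumerate}
\item A basis of $\mathcal H/Z(\mathcal H)$ consisting of vertex representatives, nonstandard neighbours of vertices and free elements. Together with it, a basis of $Z(\mathcal H)$ consisting of commutators of non-commuting basis pairs plus a free part.
\item A back-and-forth over sets closed under this decomposition. It should show that the type of a tuple is determined by linear and incidence data together with the $\Gamma^*$-type of \emph{all} vertices entering its coordinates. This includes vertices in its non-central and central coordinates, and the vertices to which its nonstandard-neighbour coordinates are attached.
\end{enumerate}
Only then does the count go through, using that $\mathrm{Th}(\Gamma)$ has at most $\lambda$ types of finite tuples over sets of size $\lambda$. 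Since you defer exactly this step, your proposal does not establish the hard direction.
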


Every element in $H\setminus Z(H)$ is of the form $x_{v_1}^{\alpha_1}\dots x_{v_k}^{\alpha_k}z$, for $v_1,\dots,v_k\in V$, $0<\alpha_i<p$ and $z\in Z(H)$. The set $\{v_1,\dots,v_k\}$ is called the support of that element, and $k$ is called its length.

We borrow terminology from the graph $\Gamma$: an element of the form $x_v$ is called a vertex, and we say that $x_v$ and $x_u$ are neighbors if $(v,u)\in E$.

\begin{lemma}\label{L: dim computations}
    Let $a\in H\setminus Z(H)$, and let $S$ be the support of $a$.
    \begin{enumerate}
        \item If $|S|=1$ and the only vertex in the support of $a$ has infinitely many neighbors then $\dim_{\mathbb{F}_p}C_H(a)/Z(H)$ is infinite.
        \item Assume there is an integer $n_0$ such that if $v\in V$ has finitely many neighbors then it has at most $n_0$ neighbors. Further, assume that if $|S|=1$ then the unique vertex in the support of $a$ has finitely many neighbors. Then $\dim_{\mathbb{F}_p}C_H(a)/Z(H)\leq \max\{3,n_0+1\}$.
    \end{enumerate}
\end{lemma}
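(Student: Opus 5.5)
We reduce both parts to linear algebra over $\mathbb{F}_p$. By Fact \ref{F:properties of Mekler group}, the commutator map induces an alternating bilinear map $\beta\colon H/Z(H)\times H/Z(H)\to Z(H)$. On basis vectors it is given by $\beta(x_v,x_u)=[x_v,x_u]$. This is a basis element of $Z(H)$ (up to sign) when $v\neq u$ and $(v,u)\notin E$, and it is $0$ otherwise. Since $Z(H)$ is central, $C_H(a)/Z(H)$ is exactly the $\beta$-orthogonal of $\bar a=aZ(H)$. Write $\bar a=\sum_{v\in S}\alpha_v x_v$ and $\bar b=\sum_w\gamma_w x_w$. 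Then
\[
\beta(\bar a,\bar b)=\sum_{v\in S}\sum_{w}\alpha_v\gamma_w\,[x_v,x_w].
\]
Because the $[x_v,x_w]$ with $v<w$ non-adjacent are linearly independent, $\bar b\in C_H(a)/Z(H)$ if and only if, for every unordered non-edge pair $\{v,w\}$ with $v\neq w$, the coefficient $\alpha_v\gamma_w-\alpha_w\gamma_v$ vanishes. Here we set $\alpha_w=0$ for $w\notin S$.

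\textbf{Part (1).} Let $S=\{v\}$. The conditions say that $\gamma_w=0$ for every $w\neq v$ not adjacent to $v$. Hence $C_H(a)/Z(H)$ is spanned by $x_v$ together with the $x_w$ for $w$ a neighbor of $v$. This span has infinite dimension when $v$ has infinitely many neighbors.

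\textbf{Part (2).} If $|S|=1$, the computation above bounds the dimension by $1+n_0$. So assume $|S|\geq 2$, and take $\bar b$ in the centralizer. For $w\notin S$, the condition for a pair $\{v,w\}$ with $v\in S$ non-adjacent to $w$ gives $\alpha_v\gamma_w=0$, hence $\gamma_w=0$. So $w\notin S$ can carry a nonzero coefficient only if $w$ is adjacent to every vertex of $S$. Since $|S|\geq 2$ and there are no squares, at most one such $w$ exists; two common neighbors of $v_1,v_2\in S$ would form a square. There are also no triangles, so if such a $w$ exists, $S$ is an independent set.

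Now consider the coordinates inside $S$. If $v,v'\in S$ are non-adjacent, then $\gamma_v/\alpha_v=\gamma_{v'}/\alpha_{v'}$. Consider the complement graph on $S$, in which two vertices are joined when they are non-adjacent in $\Gamma$. On each connected component of this complement graph, $(\gamma_v)$ is proportional to $(\alpha_v)$. So the dimension contributed by $S$ is at most the number of such components.

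\textbf{The main obstacle} is bounding the number of components. The $\Gamma$-induced graph on $S$ is triangle-free and square-free. If the complement on $S$ has at least two components, then every vertex of one component is $\Gamma$-adjacent to every vertex of the others. Let $A$ and $B$ be two components. If some component has two vertices, say $a,a'\in A$, pick $b\in B$: it is adjacent to both. Now take $b'\in B$ with $b'\neq b$, or any vertex of a third component. Such a vertex is also adjacent to both $a$ and $a'$, creating a square, or, if $a\sim a'$, a triangle. So either the complement on $S$ is connected, giving contribution $1$, or all components are singletons. In the latter case $S$ is a clique in $\Gamma$, hence $|S|\leq 2$ by triangle-freeness, giving contribution at most $2$.

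Combining the cases:
\begin{itemize}
\item If the complement on $S$ is connected, we get at most $1$ from $S$ plus $1$ from a possible common neighbor, so at most $2$.
\item If $S$ is an edge, the common neighbor cannot exist, since it would form a triangle, so we get $2$.
\end{itemize}
In all cases with $|S|\geq 2$ the dimension is at most $2\leq 3$. Together with the bound $1+n_0$ when $|S|=1$, this gives $\dim_{\mathbb{F}_p}C_H(a)/Z(H)\leq\max\{3,n_0+1\}$.
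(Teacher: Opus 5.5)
Your setup is the same as the paper's proof: the alternating form $\beta$, the criterion $\alpha_v\gamma_w-\alpha_w\gamma_v=0$ on non-edges, at most one common neighbour of $S$ outside $S$ (and none unless $S$ is independent), and proportionality of $(\gamma_v)$ to $(\alpha_v)$ on each component of the complement graph on $S$. Part (1) and the case $|S|=1$ are correct.

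\textbf{The gap} is the dichotomy in your ``main obstacle'' paragraph, namely that the complement graph on $S$ is either connected or has only singleton components. This is false. Your square argument needs a vertex $b'\neq b$ outside $A$, and when $B=\{b\}$ and there are only two components, no such vertex exists. Concretely, let $S=\{b,a_1,\dots,a_k\}$ with $k\geq 2$ span a star centred at $b$ in $\Gamma$. This is triangle- and square-free, and the complement graph on $S$ has exactly two components, $\{a_1,\dots,a_k\}$ and $\{b\}$. The situation occurs in the paper's graph $A$: for $n\neq m$, take $a=x_nx_mx_{(\{n,m\},0)}$, whose support $\{n,m,(\{n,m\},0)\}$ induces a path centred at $(\{n,m\},0)$. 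Your final case list (complement connected, or $S$ a single edge) omits this case. As written, the argument therefore does not cover every $a$ with $|S|\geq 2$.

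The repair is short and uses only what you have already established. If the complement graph on $S$ had three components, one vertex from each would be pairwise $\Gamma$-adjacent, giving a triangle. So there are at most two components; this is exactly the paper's argument. If there are exactly two, $S$ contains a $\Gamma$-edge, so by your own remark $S$ has no common neighbour outside $S$. Hence the dimension is at most $1+1$ or $2+0$, i.e.\ at most $2$. This recovers your sharper bound. The paper instead adds the two contributions to get $2+1=3$, without using that they cannot occur together.
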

\begin{proof}
    (1) Assume that $x_v$ is the unique element in the support of $a$. Then $x_u\in C_H(a)$ for any $u\in V$ with $(u,v)\in E$.

    (2) Since $H/Z(H)$ is a vector space over $\mathbb{F}_p$ it will be convenient to write everything additively. The commutator map $[\cdot,\cdot]$ induces a bilinear map $H/Z(H)\times H/Z(H)\to Z(H)$ (recall that the latter is also an $\mathbb{F}_p$-vector space). Let $a=\alpha_1 x_{v_1}+\dots + \alpha_k x_{v_k}\in H/Z(H)$ and set $S=\{v_1,\dots, v_k\}$ to be its support. Let $b\in C_H(a)/Z(H)$ be a non-central element. Write $b=b_0+b_1$ where the vertices in $b_0$ are from the support of $S$, and those in $b_1$ are disjoint from $S$. 

    \begin{claim}
        For $T=\{x_v: \text{$(v,u)\in E$ for all $u\in S$}\}$,
        $b_1\in \mathrm{Sp}_{\mathbb{F}_p}(T)$. If $|S|>1$ then $|T|\leq 1$. 
    \end{claim}
    \begin{claimproof}
        Let $x_v$ be a vertex in the support of $b_1$ and let $\beta$ be its coefficient. As $x_v$ is not in $S$, for any $v_i\in S$ the coefficient of $[x_v,x_{v_i}]$ in $[b,a]=0$ (as an element of $Z(H)$) is $\beta\alpha_i$. If $(v,v_i)\notin E$ then $p|\beta\alpha_i$; but $0,\beta,\alpha_i<p$, contradiction.

        Assume that $|S|>1$, and let $s,t\in S$ be distinct. Towards a contradiction let $v,u$ be distinct vertices connected to both $s$ and $t$. If $t$ and $s$ are connected we get a triangle. If $t$ and $s$ are not connected we arrive to a square. Either way, this contradicts the niceness of $\Gamma$.
    \end{claimproof}

    Assume that $|S|=1$ and that the vertex in the support of $a$ has only finitely many neighbors, then the length of $b_0$ is at most $1$ and $|T|$ is bounded above by $n_0$. Thus every such $b$ lies inside a $(n_0+1)$-dimensional subspace, so $\dim_{\mathbb{F}_p} C_H(a)/Z(H)\leq n_0+1$.
    
    We now assume that $|S|>1$, so $|T|\leq 1$. By the claim, $b_1$ is in the span of $T$, so lies in a subspace of dimension at most $1$. It remains to consider $b_0$. We will show that $b_0$ lies in a subspace of dimension $2$ and so in this case $\dim_{\mathbb{F}_p}C_H(a)/Z(H)\leq 3$ independently of the choice of $a$.

     Let $\Delta$ be the complement graph induced on the finite set of vertices $S$. I.e., two elements in $S$ are connected by an edge if they are not connected by an edge in $\Gamma$. The point is that $\Delta$ has at most $2$ connected components. Indeed, otherwise $\Gamma$ would contain a triangle. Let $C_1$ and $C_2$ be two connected components; if $\Delta$ has only one connected component, set $C_2=\emptyset$.

        For the following, recall that $a=\alpha_1x_{v_1}+\dots+ \alpha_k x_{v_k}$. Let $a_1=\sum_{v_i\in C_1}\alpha_iv_i$ and $a_2=\sum_{v_i\in C_2}\alpha_iv_i$.

        We claim that $b_0\in \mathrm{Sp}_{\mathbb{F}_p}\{a_1,a_2\}$. Assume that $b_0=\beta_1x_{v_1}+\dots+\beta_kx_{v_k}$, where some of the $\beta_i$ may be equal to $0$. As $[a,b]=0$, for every $1\leq i<j\leq k$ if $[x_{v_i},x_{v_j}]\neq e$ then $\frac{\beta_i}{\alpha_i}=\frac{\beta_j}{\alpha_j}$ modulo $p$ (see \cite[Lemma A.3.4]{Hodges}). This means that there are $c_1,c_2\in \mathbb{F}_p$ such that for every $v_i\in C_1$, $\beta_i=c_1\alpha_i$ and for every $v_i\in C_2$, $\beta_i=c_2\alpha_i$. The result follows easily.

        We conclude that $\dim_{\mathbb{F}_p}C_H(a)/Z(H)\leq \max\{3,n_0+1\}$.
\end{proof}

\begin{definition}
    An element of $H$ is \emph{vertex-like} if it is of the form $x_v^\alpha z$, for $v\in V$, $0<\alpha<p$ and $z\in Z(H)$.
\end{definition}

    By \cite[Theorem A.3.10(b)]{Hodges}, the set of vertex-like elements is $\emptyset$-definable. Thus, to prove the following result, we do not really require the previous proposition. The previous proposition will be useful in the sequel.
\begin{corollary}
    Assume there is an integer $n_0$ such that if $v\in V$ has finitely many neighbors then it has at most $n_0$ neighbors. Then the set of vertex-like elements with infinitely many neighbors, that is, \[\{x_v^{\alpha}z: 0<\alpha<p,\, z\in Z(H),\, \text{$v$ has infinitely many neighbors}\},\] is $\emptyset$-definable.
\end{corollary}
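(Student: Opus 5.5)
The plan is to define the set as the set of non-central $a$ whose centralizer modulo the center is large. Let $N=\max\{3,n_0+1\}$ and consider
\[
\varphi(x):\quad x\notin Z(H)\ \wedge\ \exists y_1\dots\exists y_{N+1}\Big(\bigwedge_{i} [x,y_i]=e \ \wedge\ \bigwedge_{(\gamma_1,\dots,\gamma_{N+1})\neq 0} y_1^{\gamma_1}\cdots y_{N+1}^{\gamma_{N+1}}\notin Z(H)\Big),
\]
where the $\gamma_i$ range over $\{0,\dots,p-1\}$. Since $Z(H)$ is $\emptyset$-definable (as the center), $\varphi$ is a $\emptyset$-formula. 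It asserts exactly that $x$ is non-central and that $\dim_{\mathbb{F}_p}C_H(x)/Z(H)\geq N+1$. Here I use that $H/Z(H)$ is an $\mathbb{F}_p$-vector space by Fact~\ref{F:properties of Mekler group}, and that $Z(H)\leq C_H(x)$.

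The first step is to show that every vertex-like $x_v^\alpha z$ with $v$ having infinitely many neighbors satisfies $\varphi$. Its support is $\{v\}$, so Lemma~\ref{L: dim computations}(1) gives that $C_H(a)/Z(H)$ has infinite dimension, and in particular dimension $\geq N+1$.

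The second step is the converse. Let $a\in H\setminus Z(H)$ be outside the set. Then either $|S|>1$, or $S=\{v\}$ with $v$ having only finitely many neighbors. In both cases the hypotheses of Lemma~\ref{L: dim computations}(2) hold, so $\dim C_H(a)/Z(H)\leq N$ and $\varphi(a)$ fails. Central elements are excluded by the first conjunct of $\varphi$. Hence $\varphi$ defines the required set.

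The only delicate point is making sure that "support of size one" coincides with "vertex-like". This follows from the normal form $x_{v_1}^{\alpha_1}\cdots x_{v_k}^{\alpha_k}z$ and the basis description of $H/Z(H)$, so I do not expect a real obstacle. Alternatively, one could intersect $\varphi$ with the $\emptyset$-definable set of vertex-like elements from \cite[Theorem A.3.10(b)]{Hodges}, but this is not needed.
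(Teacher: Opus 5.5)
Your proposal is correct and is essentially the argument the paper intends. The paper writes no separate proof: the corollary is read off from Lemma~\ref{L: dim computations}, using the gap between infinite $\dim_{\mathbb{F}_p}C_H(a)/Z(H)$ for vertex-like elements with infinitely many neighbors and the bound $\max\{3,n_0+1\}$ for every other non-central element, which you correctly express by a parameter-free formula (the paper also notes, as you do, the alternative via \cite[Theorem A.3.10(b)]{Hodges}).
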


We now define the main graph that will be of interest to us. 
Let $A=(V,E)$ be the graph with vertex set $V=\mathbb{N}\cup\{(\{a,b\},i): a\neq b\in \mathbb{N},\, i=0,1,1.25,1.5,1.75\}$ and edge relation $E$ which is the symmetric closure of \[\{(n,(\{a,b\},0)): a,b,n\in \mathbb{N},\,a\neq b,\, a=n\vee b=n\}\cup \]\[\{((\{a,b\},0),(\{a,b\},1)): a\neq b\in \mathbb{N}\}\cup\]
\[\{((\{a,b\},1),(\{a,b\},1.25)): a\neq b\in \mathbb{N}\}\cup\]
\[\{((\{a,b\},1.25),(\{a,b\},1.5)): a\neq b\in \mathbb{N}\}\cup\]
\[\{((\{a,b\},1.5),(\{a,b\},1.75)): a\neq b\in \mathbb{N}\}\cup\]
\[\{((\{a,b\},1.75),(\{a,b\},0)): a\neq b\in \mathbb{N}\}.\]

The following is a rough illustration.

\begin{figure}[H]
\centering
\begin{tikzpicture}[
    thick,
    line cap=round,
    dot/.style={circle, fill, inner sep=1pt},
    every node/.style={font=\scriptsize, inner sep=1pt}
]

\def\PentR{1.05}

\newcommand{\pentagoncycle}[2]{%
    \begin{scope}[shift={(#1)}]
        \coordinate (#2o) at (0,0);

        \node[dot] (#2a) at ({\PentR*(1+cos(108))},{\PentR*sin(108)}) {};
        \node[dot] (#2b) at ({\PentR*(1+cos(36))}, {\PentR*sin(36)}) {};
        \node[dot] (#2c) at ({\PentR*(1+cos(-36))},{\PentR*sin(-36)}) {};
        \node[dot] (#2d) at ({\PentR*(1+cos(-108))},{\PentR*sin(-108)}) {};

        \draw
            (#2o) --
            (#2a) --
            (#2b) --
            (#2c) --
            (#2d) --
            (#2o);
    \end{scope}
}

\node (n1) at (0,  3.8) {$1$};
\node (n2) at (0,  0.0) {$2$};
\node (n3) at (0, -3.8) {$3$};

\node (ab12) at (3.0,  3.8) {$\bigl(\{1,2\},0\bigr)$};
\node (ab13) at (3.0,  1.2) {$\bigl(\{1,3\},0\bigr)$};
\node (ab23) at (3.0, -1.2) {$\bigl(\{2,3\},0\bigr)$};
\node (ab24) at (3.0, -3.8) {$\bigl(\{2,4\},0\bigr)$};

\draw (n1.east) -- (ab12.west);
\draw (n1.east) -- (ab13.west);

\draw (n2.east) -- (ab12.west);
\draw (n2.east) -- (ab23.west);
\draw (n2.east) -- (ab24.west);

\draw (n3.east) -- (ab13.west);
\draw (n3.east) -- (ab23.west);

\node at (3.0, 0.0) {$\vdots$};
\node at (3.0,-5.4) {$\vdots$};


\pentagoncycle{ab12.east}{i12}
\pentagoncycle{ab13.east}{i13}
\pentagoncycle{ab23.east}{i23}
\pentagoncycle{ab24.east}{i24}

\node at (7.0, 0.0) {$\vdots$};
\node at (7.0,-5.4) {$\vdots$};

\end{tikzpicture}
\caption{For each pair \(\{a,b\}\), the pentagon represents the 5-cycle attached to \((\{a,b\},0)\).}
\end{figure}

As $A$ is interpretable in the theory of infinite sets, $A$ is $\omega$-stable. It is straightforward to verify that $A$ is a nice graph and that $n_0=4$ is the finite-degree bound. Let $H=H(A)$ be the corresponding Mekler group. By Fact \ref{F: stability}, $H$ is $\omega$-stable as well.

\section{Going Up}

The goal of this section is to show  that every structure in a finite relational language can be interpreted in some finite extension of $H$ (even of degree $2$).

Let $\Gamma=(\mathbb{N},R)$ be a graph. We will also regard $R$ as a subset of $\{\{a,b\}: a\neq b\in \mathbb{N}\}$, let $\widetilde \pi_R$ be the graph automorphism swapping, for any $\{a,b\}\in R$,
\begin{list}{$\bullet$}{}
    \item $(\{a,b\},1)\leftrightarrow(\{a,b\},1.75)$ and
    \item $(\{a,b\},1.25)\leftrightarrow(\{a,b\},1.5)$,
\end{list}
and fixing all other vertices. It induces an automorphism of $H$ which we denote by $\pi_R$.

We now work in the structure $(H,\cdot,\pi_R)$. 
Let $\psi(x,y)$ be the formula stating that there does not exist an element $z\in Z(H)$ and integer $0<\alpha<p$ for which $x^\alpha=yz$.
Recall that the set of vertex-like elements is $\emptyset$-definable. Let $\varphi_R(x,y)$ be the formula \[\psi(x,y)\wedge  \exists u\exists v(\text{`$u$ and $v$ are vertex-like' }\wedge\pi_R(v)Z(H)\neq vZ(H)\]\[\wedge [u,x]=[u,y]=[u,v]=e).\]
The following is the crucial lemma.

\begin{lemma}
    For any $n, m\in \mathbb{N}$, $0<\gamma,\delta<p$ and $c,d\in Z(H)$, \[(H,\cdot ,\pi_R)\models \varphi_R(x_n^\gamma c,x_m^\delta d)\iff \{n,m\}\in R.\]
\end{lemma}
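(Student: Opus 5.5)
We prove both directions directly from the definition of $\varphi_R$, using the centralizer description in the proof of Lemma \ref{L: dim computations} together with the structure of $\widetilde\pi_R$. Set $x=x_n^\gamma c$ and $y=x_m^\delta d$. First observe that $\psi(x,y)$ holds exactly when $n\neq m$. Indeed, if $n=m$ then $x^\alpha$ and $y$ agree modulo $Z(H)$ for a suitable $\alpha$ with $\alpha\gamma\equiv\delta$. If $n\neq m$, then $x^\alpha Z(H)$ and $yZ(H)$ are distinct in the basis $\{x_vZ(H)\}$ of $H/Z(H)$. So it suffices to show, for $n\neq m$, that the existential part holds iff $\{n,m\}\in R$.

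\emph{($\Leftarrow$)} Suppose $\{n,m\}\in R$. Take $u=x_{(\{n,m\},0)}$ and $v=x_{(\{n,m\},1)}$. Both are vertex-like. The vertex $(\{n,m\},0)$ is adjacent to $n$, to $m$ and to $(\{n,m\},1)$, so $u$ commutes with $x$, $y$ and $v$; central factors do not affect this. Finally, $\pi_R(v)=x_{(\{n,m\},1.75)}$, which is a different basis element modulo $Z(H)$, so $\pi_R(v)Z(H)\neq vZ(H)$.

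\emph{($\Rightarrow$)} Let $u=x_w^\alpha z$ and $v=x_{w'}^\beta z'$ witness the formula.

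\emph{Step 1: locate $w$.} If $u$ and $x_n^\gamma c$ commute, then computing $[u,x]$ with the bilinear commutator map gives $\alpha\gamma[x_w,x_n]=0$ in $Z(H)$. By Fact \ref{F:properties of Mekler group}, $[x_w,x_n]$ is then trivial, so either $w=n$ or $(w,n)\in E$. The same holds for $m$. Now $w\neq n$, since otherwise $(n,m)\in E$, whereas vertices of $\mathbb N$ are never adjacent to each other. Symmetrically $w\neq m$. Hence $w$ is a common neighbour of $n$ and $m$. Every neighbour of $n$ has the form $(\{a,b\},0)$ with $n\in\{a,b\}$, so $w=(\{n,m\},0)$.

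\emph{Step 2: locate $w'$.} Since $\pi_R$ is induced by $\widetilde\pi_R$, we have $\pi_R(v)Z(H)=x_{\widetilde\pi_R(w')}^\beta Z(H)$. The condition $\pi_R(v)Z(H)\neq vZ(H)$ therefore means that $w'$ is moved by $\widetilde\pi_R$. This happens only if $w'=(\{a,b\},i)$ with $\{a,b\}\in R$ and $i\in\{1,1.25,1.5,1.75\}$. Moreover $[u,v]=e$ forces $w'=w$ or $(w',w)\in E$. The neighbours of $(\{n,m\},0)$ among such vertices are only $(\{n,m\},1)$ and $(\{n,m\},1.75)$. It follows that $\{a,b\}=\{n,m\}$, so $\{n,m\}\in R$.

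The main point needing care is the commutation criterion used in Steps 1 and 2: two vertex-like elements commute iff their vertices are equal or adjacent. This follows from Fact \ref{F:properties of Mekler group}(2) and the bilinearity noted in the proof of Lemma \ref{L: dim computations}, because $[x_w,x_{w'}]$ is either trivial (adjacent or equal) or a basis element of $Z(H)$, and $\alpha\beta\not\equiv 0 \pmod p$. Everything else is bookkeeping on the graph $A$.
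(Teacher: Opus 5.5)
Your proof is correct and follows the paper's argument: the same witnesses $u=x_{(\{n,m\},0)}$, $v=x_{(\{n,m\},1)}$ for $(\Leftarrow)$, and for $(\Rightarrow)$ the commutation of vertex-like elements forces the vertex of $u$ to be $(\{n,m\},0)$ and the vertex of $v$ to be a neighbour of it that is moved by $\widetilde\pi_R$. You are also slightly more careful than the paper, since you explicitly rule out $w\in\{n,m\}$ and verify that $\psi$ holds exactly when $n\neq m$.
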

\begin{proof}
    The first direction is straightforward: For $\{n,m\}\in R$ necessarily $n\neq m$ and we can choose $u=x_{(\{n,m\},0)}$ and $v=x_{(\{n,m\},1)}$.

    Assume that $\varphi_R(x_n^\gamma c,x_m^\delta d)$ holds in $(H,\cdot ,\pi_R)$. As $\psi(x_n^\gamma c,x_m^\delta d)$ necessarily $n\neq m$. Let $u,v$ be elements witnessing that the formula holds: so $u=x_t^{\alpha}z$ and $v=x_s^{\beta}z'$ for $t,s\in V$, $0< \alpha,\beta< p$ and $z,z'\in Z(H)$. 

    Since $[x_t^{\alpha}z,x_n^\gamma c]=[x_t^{\alpha}z,x_m^\delta d]=e$, we also get $[x_t,x_n]^{\alpha \gamma}=[x_t,x_m]^{\alpha \delta}=e$, but since $p$ does not divide $\alpha\gamma$ and $\alpha\delta$ necessarily $(t,m),(t,n)\in E$. This means that $t=(\{n,m\},0)$. As $\pi_R(v)Z(H)\neq vZ(H)$, we get that $s=(\{a,b\},i)$ for $\{a,b\}\in R$ and $i=1,1.25,1.5$ or $1.75$. 
    
    We also have $[x_t^{\alpha}z,x_s^{\beta}z']=e$, so $[x_t,x_s]^{\alpha\beta}=e$. If $x_t$ and $x_s$ do not commute, then  $p|\alpha\beta$, contradiction. We deduce that $\{n,m\}\in R$.
\end{proof}

\begin{corollary}\label{C: up interpret any countable graph}
  The countable graph $\Gamma=(\mathbb{N},R)$ is interpretable in  $(H,\cdot,\pi_R)$.
\end{corollary}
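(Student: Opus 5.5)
We interpret $\Gamma$ on a quotient of a definable set. The universe is the set $X$ of vertex-like elements whose vertex has infinitely many neighbors, and we identify two elements of $X$ when they come from the same vertex. The edge relation is induced by $\varphi_R$. The set $X$ is $\emptyset$-definable in $(H,\cdot)$ by the Corollary following the definition of vertex-like elements, since $A$ has the finite-degree bound $n_0=4$.

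First we identify $X$ exactly. The vertices of $A$ with infinitely many neighbors are precisely the $n\in\mathbb{N}$: $n$ is joined to $(\{n,b\},0)$ for every $b\neq n$. Every vertex $(\{a,b\},i)$ has degree at most $4$; for instance $(\{a,b\},0)$ is adjacent to $a$, $b$, $(\{a,b\},1)$ and $(\{a,b\},1.75)$. Hence
\[X=\{x_n^\gamma c: n\in\mathbb{N},\ 0<\gamma<p,\ c\in Z(H)\}.\]

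Next we define the equivalence relation $x\sim y$ by $\neg\psi(x,y)$, that is, $\exists z\in Z(H)$ and some $0<\alpha<p$ with $x^\alpha=yz$. This is first-order because $\alpha$ ranges over a finite set and $Z(H)$ is definable. For $x=x_n^\gamma c$ and $y=x_m^\delta d$, we have $x^\alpha = x_n^{\alpha\gamma}c^\alpha$ modulo $Z(H)$. Since $\{x_vZ(H)\}$ is a basis of $H/Z(H)$ (Fact \ref{F:properties of Mekler group}), $x^\alpha\in yZ(H)$ for some $\alpha$ if and only if $n=m$: in that case choose $\alpha\equiv\delta\gamma^{-1}\pmod p$. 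So $\sim$ restricted to $X$ is exactly ``same vertex''. In particular it is an equivalence relation, and its classes are in bijection with $\mathbb{N}$ via $[x_n^\gamma c]\mapsto n$.

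Finally, the edge relation on $X/\!\sim$ is given by $\varphi_R(x,y)$. By the preceding Lemma, $\varphi_R(x_n^\gamma c,x_m^\delta d)$ holds if and only if $\{n,m\}\in R$. This condition depends only on the classes of the two arguments, so the relation is well defined on the quotient, and it is symmetric and irreflexive because $R$ consists of unordered pairs of distinct elements. Hence the map $[x_n^\gamma c]\mapsto n$ is an isomorphism from $(X/\!\sim,\varphi_R)$ onto $\Gamma$, and it is an interpretation in $(H,\cdot,\pi_R)$. We use $\pi_R$ only inside $\varphi_R$, and no parameters are needed.

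The one step that needs care is the exact identification of $X$, which must contain no vertex-like elements other than those of the vertices $n\in\mathbb{N}$. This comes down to checking that every pentagon vertex $(\{a,b\},i)$ has finite degree. The remaining steps are direct applications of the Lemma and Fact \ref{F:properties of Mekler group}.
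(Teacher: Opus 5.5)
Your proof is correct and follows the paper's argument. The paper likewise takes the $\emptyset$-definable set of vertex-like elements with infinitely many neighbors and quotients it by ``powers of each other modulo $Z(H)$'', which on this set is exactly your $\neg\psi$; it then uses the preceding lemma to see that $\varphi_R$ induces $R$, and you simply fill in the details the paper leaves implicit (the degree check, the basis argument for the equivalence relation, and well-definedness on classes).
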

\begin{proof}
 Let $Y$ be the set of vertex-like elements with infinitely many neighbors, and let $\approx$ be the equivalence relation defined by: $g\approx h \iff \text{modulo $Z(H)$,  $g$ and $h$ are powers of the other}$. Then $Y/\approx$ can be identified with the vertices of the form $x_n$, $n\in \mathbb{N}$. The formula $\varphi_R$ induces the edge relation $R$ on $Y/\approx$.    
\end{proof}


\begin{proposition}\label{P: up}
For any first-order countable structure $M$ in a finite  language there is  a supergroup $G\supseteq H$ such that $[G:H]=2$ and that $(G,\cdot)$ interprets $M$.
\end{proposition}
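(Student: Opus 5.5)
The plan is to take $G=H\rtimes\langle\sigma\rangle$, where $\sigma$ has order $2$ and acts on $H$ by $\pi_R$ for a suitable graph $R$ on $\mathbb{N}$. Then interpret $(H,\cdot,\pi_R)$ in $(G,\cdot)$, and conclude with Corollary \ref{C: up interpret any countable graph}.

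\textbf{Reducing to graphs.} Every countable structure $M$ in a finite language is interpretable in some countable graph. This is the classical fact that graphs are universal for interpretability (see \cite{Hodges}, Section 5.5). If $M$ is finite the statement is trivial, so we may assume $M$ is infinite and that this graph is $\Gamma=(\mathbb{N},R)$. Since interpretability is transitive, it suffices to interpret $(H,\cdot,\pi_R)$ in $(G,\cdot)$ for some $G\supseteq H$ with $[G:H]=2$.

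\textbf{The extension.} The map $\widetilde\pi_R$ is an involution of $A$, so $\pi_R$ is an automorphism of $H$ with $\pi_R^2=\mathrm{id}$. If $R=\emptyset$, take $G=H\times\mathbb{Z}/2\mathbb{Z}$; otherwise take $G=H\rtimes_{\pi_R}\mathbb{Z}/2\mathbb{Z}$. In both cases $[G:H]=2$. Fix $\sigma\in G\setminus H$. Then $\sigma h\sigma^{-1}=\pi_R(h)$ for $h\in H$, so with $\sigma$ as a parameter, $\pi_R$ is definable on $H$ once $H$ is definable.

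\textbf{Defining $H$.} This is the main step. $H$ has exponent $p$, which is odd, and every element of $G\setminus H$ has even order. So $H=\{g\in G: g^p=e\}$ would work if every element of $G\setminus H$ had order exactly $2$, but in general they do not. I would instead use $H=\{g^2: g\in G\}$. If $h\in H$, then $h=(h^{(p+1)/2})^2$ since $h^p=e$. Conversely, $(h\sigma)^2=h\pi_R(h)\sigma^2=h\pi_R(h)\in H$ because $\sigma^2=e$, and squares of elements of $H$ lie in $H$. So $H$ is $\emptyset$-definable in $(G,\cdot)$.

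With $H$ and $\pi_R$ definable (the latter using the parameter $\sigma$), $(H,\cdot,\pi_R)$ is definable in $(G,\cdot)$. By Corollary \ref{C: up interpret any countable graph} it interprets $\Gamma$, and $\Gamma$ interprets $M$. Composing these interpretations gives the result.
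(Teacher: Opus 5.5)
Your proof is correct and follows the paper's route: you interpret $M$ in a countable graph $\Gamma=(\mathbb{N},R)$, form $G=H\rtimes_{\pi_R}C_2$, define $H$ inside $G$, and recover $\pi_R$ as conjugation by the generator $\sigma=(e,1)$. Note that $\sigma$ must be this specific element rather than an arbitrary element of $G\setminus H$, as your later use of $\sigma^2=e$ already assumes.

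Your dismissal of the formula $x^p=e$ is unfounded; the paper defines $H$ by exactly this formula. Your own observation that elements of $G\setminus H$ have even order shows it works, as does the fact that $g^p\in gH$ for odd $p$. Your alternative definition of $H$ as the set of squares is nevertheless also correct.
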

\begin{proof}
    By, for example, \cite[Theorem 5.5.1]{Hodges}, we may interpret the structure $M$ in a countable graph $\Gamma=(\mathbb{N},R)$. Thus, $\Gamma$ is interpretable in $(H,\cdot,\pi_R)$.

    Let $G=H\rtimes_{\pi_R} C_2$ with $C_2=\{0,1\}$, i.e. consider the set $G=H\times C_2$ with group structure $(h,\varepsilon)\cdot (k,\delta)=(h\pi_R^\varepsilon(k),\varepsilon+\delta)$, where $\pi_R^0=\mathrm{id}$ and $\pi_R^1=\pi_R$.

    Inside $G$, $H$ is definable by the formula $x^p=e$. For $t=(e,1)$, the automorphism $\pi_R$ is given on $H$ by $h\mapsto tht^{-1}$. Hence, $(H,\cdot,\pi_R)$ is definable in $G$, and $M$ is interpretable in $G$.
\end{proof}

\section{Going Down}

The goal of this section is to show that every structure in a finite relational language can be interpreted in some finite-index subgroup of $H$.

As $H/Z(H)$ is an $\mathbb{F}_p$-vector space with basis $\{x_vZ(H):v\in V\}$,  for any countable graph $\Gamma=(\mathbb{N},R)$ we define the following linear functional $\ell_R:H/Z(H)\to \mathbb{F}_p$:
\[
 \ell_R(x_vZ(H))=
 \begin{cases}
 0,&\text{if $v\in \mathbb{N}$},\\
 0,&\text{if $v=(\{n,m\},0)$ and $\{n,m\}\in R$},\\
 1,&\text{otherwise.}
 \end{cases}
\]

The kernel $\ker(\ell_R)$ is an abelian subgroup of $H/Z(H)$ so $H_\Gamma:=\pi^{-1}(\ker(\ell_R))$, where $\pi:H\to H/Z(H)$, is a normal subgroup of $H$ of index $p$.

Note that $x_n\in H_\Gamma$ for all $n\in \mathbb{N}$ and $x_{(\{n,m\},0)}\in H_\Gamma$ for all $\{n,m\}\in R$.

\begin{lemma}\label{L:auxiallry going down}
    \begin{enumerate}
        \item For any $x_n\in H$, $0<\alpha<p$, $c\in Z(H)$ and $a=x_{t_1}^{\alpha_1}\cdot\ldots\cdot x_{t_k}^{\alpha_k}z\in H$ with $t_i\in V$ distinct, $0<\alpha_i<p$ and $z\in Z(H)$: \[[x_n^\alpha c,a]=e \iff \text{ for every $i$, either $t_i=n$ or there is some $l\in \mathbb{N}$, $l\neq n$, with  $t_i=(\{n,l\},0)$}.\]
        \item $Z(H)=Z(H_\Gamma)$.
    \end{enumerate}
\end{lemma}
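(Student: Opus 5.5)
The plan is to prove (1) by a direct commutator computation in $Z(H)$ using the basis from Fact \ref{F:properties of Mekler group}, and then to derive (2) from (1) by testing a central element of $H_\Gamma$ against the vertices $x_n$, all of which lie in $H_\Gamma$.

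For (1), use that $H$ is nilpotent of class $2$. The commutator map is therefore bilinear, and central factors can be dropped. This gives
\[
[x_n^\alpha c,a]=\prod_{i=1}^k [x_n,x_{t_i}]^{\alpha\alpha_i}.
\]
Factors with $t_i=n$ are trivial. Factors with $(n,t_i)\in E$ are trivial by the defining relations of $H$. The remaining factors are $[x_n,x_{t_i}]^{\alpha\alpha_i}$ with $(n,t_i)\notin E$ and $t_i\neq n$. Since the $t_i$ are distinct, these are nonzero multiples, up to sign depending on the fixed order $<$, of pairwise distinct basis vectors of $Z(H)$ from Fact \ref{F:properties of Mekler group}(2). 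The coefficients $\pm\alpha\alpha_i$ are nonzero mod $p$ because $0<\alpha,\alpha_i<p$.

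Hence the commutator is trivial if and only if every $t_i$ is either $n$ or a neighbour of $n$. By the definition of $A$, the neighbours of $n$ are exactly the vertices $(\{n,l\},0)$ with $l\neq n$, which yields (1).

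For (2), first show $Z(H)\subseteq Z(H_\Gamma)$. We have $Z(H)\subseteq H_\Gamma$ because $H_\Gamma$ is the full preimage of $\ker(\ell_R)$ and so contains $\ker\pi=Z(H)$. Since $Z(H)$ is central in all of $H$, it is central in $H_\Gamma$.

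For the reverse inclusion, suppose $a\in Z(H_\Gamma)\setminus Z(H)$ and write $a=x_{t_1}^{\alpha_1}\cdots x_{t_k}^{\alpha_k}z$ with $k\geq 1$. Every $x_n$ lies in $H_\Gamma$, so $a$ commutes with every $x_n$, and (1) applies for every $n\in\mathbb{N}$. Fix $t_1$ and split into two cases.
\begin{enumerate}
\item If $t_1=m\in\mathbb{N}$, choose $n\neq m$. Then $t_1\neq n$, and $t_1$ is not of the form $(\{n,l\},0)$. This contradicts (1).
\item If $t_1=(\{b,b'\},0)$ or $t_1=(\{b,b'\},i)$ with $i\in\{1,1.25,1.5,1.75\}$, choose $n\notin\{b,b'\}$. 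Then $t_1\neq n$, and $t_1\neq(\{n,l\},0)$ for every $l$, again contradicting (1).
\end{enumerate}
In both cases a suitable $n$ exists because $\mathbb{N}$ is infinite. So $k=0$ and $a\in Z(H)$.

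I expect no serious obstacle. The only point needing care is the independence claim in (1): distinct $t_i$ must give distinct basis commutators $[x_n,x_{t_i}]$. This holds because each such basis element is determined by the unordered pair $\{n,t_i\}$ of non-adjacent vertices.
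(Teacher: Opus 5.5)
Your proof is correct and follows the paper's argument. Part (1) uses bilinearity of the commutator together with the basis of $Z(H)$, and part (2) tests a putative central element of $H_\Gamma$ against the vertices $x_n\in H_\Gamma$. You also supply details the paper leaves implicit: the linear independence of the distinct commutators $[x_n,x_{t_i}]$, and the explicit choice of $n\in\mathbb{N}$ avoiding the support vertex $t_1$.
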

\begin{proof}
    (1) If $[a,x_n^\alpha c]=e$ then $\prod_i [x_{t_i},x_n]^{\alpha\alpha_i}=e$. Writing $I=\{i: [x_{t_i},x_n]=e\}$, we have $\prod_{i\notin I}[x_{t_i},x_n]^{\alpha \alpha_i}=e$, hence $p|\alpha\alpha_i$. But $0<\alpha,\alpha_i<p$ so $I=\{i: 1\leq i\leq k\}$. The other direction is straightforward.

    (2) Let $a\in Z(H_\Gamma)$. Since $x_n \in H_\Gamma$ for all $n\in \mathbb{N}$, Part (1) implies that $a\in Z(H)$. For the other direction, note that $Z(H)\subseteq H_\Gamma$ by definition and so $Z(H)\subseteq Z(H_\Gamma)$.
\end{proof}

From now on we identify between $Z(H)$ and $Z(H_\Gamma)$. As before, let  $\psi(x,y)$ be the formula stating that there does not exist an element $z\in Z(H)$ and integer $0<\alpha<p$ for which $x^\alpha=yz$.
Let $\varphi(x,y)$ be the formula \[\psi(x,y)\wedge \exists v(v\notin Z(H)\wedge [v,x]=[v,y]=e).\]

\begin{lemma}\label{L: down relation}
    For any $n,m\in \mathbb{N}$, $0<\gamma,\delta<p$ and $c,d\in Z(H_\Gamma)$, \[(H_\Gamma,\cdot)\models \varphi(x_n^\gamma c,x_m^\delta d)\iff \{n,m\}\in R.\]
\end{lemma}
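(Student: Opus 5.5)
We prove both directions by working inside $H$, using Lemma \ref{L:auxiallry going down} and the identification $Z(H_\Gamma)=Z(H)$.

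\emph{The direction $\{n,m\}\in R\Rightarrow\varphi$.} Suppose $\{n,m\}\in R$. Then $n\neq m$, so $x_n^\gamma c$ and $x_m^\delta d$ have different supports modulo $Z(H)$. Since $H/Z(H)$ has basis $\{x_vZ(H)\}$, neither element is a power of the other times a central element, and $\psi$ holds. For the witness take $v=x_{(\{n,m\},0)}$. It lies in $H_\Gamma$ because $\ell_R$ vanishes on it when $\{n,m\}\in R$, and it is not central by Fact \ref{F:properties of Mekler group}. It commutes with both $x_n^\gamma c$ and $x_m^\delta d$, since $(\{n,m\},0)$ is adjacent to $n$ and to $m$.

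\emph{The direction $\varphi\Rightarrow\{n,m\}\in R$.} Suppose $\varphi(x_n^\gamma c,x_m^\delta d)$ holds in $H_\Gamma$. First, $\psi$ forces $n\neq m$, since otherwise $(x_n^\gamma c)^\alpha = x_m^\delta d z$ for a suitable $\alpha$ with $\alpha\gamma\equiv\delta \pmod p$. Let $v\in H_\Gamma\setminus Z(H)$ be the witness, and write $v=x_{t_1}^{\alpha_1}\cdots x_{t_k}^{\alpha_k}z$ with distinct $t_i$, $0<\alpha_i<p$ and $k\geq 1$.

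\begin{enumerate}
\item Applying Lemma \ref{L:auxiallry going down}(1) to $x_n^\gamma c$ shows that each $t_i$ is either $n$ or of the form $(\{n,l\},0)$ with $l\neq n$.
\item Applying it to $x_m^\delta d$ shows that each $t_i$ is either $m$ or of the form $(\{m,l'\},0)$.
\item Intersecting the two descriptions: $t_i=n$ is impossible, since $n\neq m$ and $n$ is not of the form $(\cdot,0)$. Symmetrically $t_i=m$ is impossible. Hence every $t_i$ equals $(\{n,m\},0)$.
\end{enumerate}

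So $k=1$ and $v=x_{(\{n,m\},0)}^{\alpha_1}z$. Since $v\in H_\Gamma$, we get $\ell_R(vZ(H))=\alpha_1\,\ell_R(x_{(\{n,m\},0)}Z(H))=0$. As $\alpha_1\not\equiv 0 \pmod p$, this gives $\ell_R(x_{(\{n,m\},0)}Z(H))=0$, which by the definition of $\ell_R$ means $\{n,m\}\in R$.

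The only delicate step is the support intersection in items (1)–(3). The pentagon vertices never commute with $x_n$, so they cannot occur in the support, and the functional $\ell_R$ then separates exactly the pairs lying in $R$.
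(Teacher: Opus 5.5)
Your proof is correct and follows the paper's argument. For one direction you use the witness $x_{(\{n,m\},0)}\in H_\Gamma$; for the converse you apply Lemma \ref{L:auxiallry going down}(1) to both elements, which forces the witness to be $x_{(\{n,m\},0)}^{\alpha}z$, and then $\ell_R(vZ(H))=0$ gives $\{n,m\}\in R$. You additionally verify $\psi$ in the forward direction and spell out the support intersection, both of which the paper leaves implicit.
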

\begin{proof}
    If $\{n,m\}\in R$ then $n\neq m$ and $x_{(\{n,m\},0)}\in H_\Gamma$. As $[x_{(\{n,m\},0)},x_n^\gamma c]=[x_{(\{n,m\},0)},x_m^\delta d]=e$ and $x_{(\{n,m\},0)}\notin Z(H)=Z(H_\Gamma)$, necessarily $H_\Gamma\models \varphi(x_n\gamma c,x_m^\delta d)$.

    For the other direction, assume that $v=x_{t_1}^{\alpha_1}\cdot\ldots\cdot x_{t_k}^{\alpha_k}z\in H_\Gamma$ witnesses $H_\Gamma\models \varphi(x_n^\gamma c,x_m^\delta d)$, with $z\in Z(H)$, $t_i\in V$ and $0<\alpha_i<p$ for all $i$. Since $v\notin Z(H_\Gamma)=Z(H)$, $k\geq 1$.

    Since $\psi(x_n^\gamma c, x_m^\delta d)$ holds, necessarily $x_n\neq x_m$. Using Lemma \ref{L:auxiallry going down}, $[v,x_n^\gamma c]=[v,x_m^\delta d]=e$ implies that $k=1$ and $t_i=(\{n,m\},0)$.

    Since $v=x_{(\{n,m\},0)}^{\alpha}z\in H_\Gamma$, $\ell_R(x_{(\{n,m\},0)}^{\alpha}Z(H))=0$. So either $\{n,m\}\in R$, as required or $\alpha=0$, contradiction.
\end{proof}

\begin{lemma}\label{L:down vertex n}
    \begin{enumerate}
        \item For any $a=x_n^\alpha c\in H_\Gamma\setminus Z(H)$, $n\in \mathbb{N}$, $0<\alpha<p$, $c\in Z(H)$, \[\dim_{\mathbb{F}_p}C_{H_\Gamma}(a)/Z(H) \geq 6.\]
        
        \item The set of elements of the form $x_n^\alpha c\in H_\Gamma$ is $\emptyset$-definable in $(H_\Gamma,\cdot)$.
    \end{enumerate}
\end{lemma}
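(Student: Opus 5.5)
The plan is to separate the elements $x_n^\alpha c$ from all other non-central elements of $H_\Gamma$ by the size of their centralizers modulo $Z(H)$. The key observation is that $C_{H_\Gamma}(a)/Z(H)=(C_H(a)/Z(H))\cap\ker(\ell_R)$. This holds because $H_\Gamma=\pi^{-1}(\ker(\ell_R))$ contains $Z(H)$, and by Lemma \ref{L:auxiallry going down}(2) we may identify $Z(H_\Gamma)$ with $Z(H)$.

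For (1), let $a=x_n^\alpha c$ with $n\in\mathbb{N}$. In $A$ the vertex $n$ is adjacent to $(\{n,l\},0)$ for every $l\neq n$, so it has infinitely many neighbors. Lemma \ref{L: dim computations}(1) then shows that $C_H(a)/Z(H)$ is infinite-dimensional; explicitly, it contains the span of all $x_{(\{n,l\},0)}Z(H)$. Since $\ker(\ell_R)$ has codimension $1$ in $H/Z(H)$, its intersection with an infinite-dimensional subspace is still infinite-dimensional. Hence $\dim_{\mathbb{F}_p}C_{H_\Gamma}(a)/Z(H)$ is infinite, and in particular it is at least $6$.

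For (2), I will show that every $a\in H_\Gamma\setminus Z(H)$ not of this form satisfies $\dim_{\mathbb{F}_p}C_{H_\Gamma}(a)/Z(H)\leq 5$. First I check the degrees in $A$:
\begin{itemize}
\item vertices $(\{a,b\},0)$ have exactly $4$ neighbors, namely $a$, $b$, $(\{a,b\},1)$ and $(\{a,b\},1.75)$;
\item the other pentagon vertices have $2$ neighbors;
\item only the vertices $n\in\mathbb{N}$ have infinitely many neighbors.
\end{itemize}
So the finite-degree bound is $n_0=4$. Suppose $a$ has support of size $>1$, or support a single vertex outside $\mathbb{N}$. Then Lemma \ref{L: dim computations}(2) gives $\dim C_H(a)/Z(H)\leq\max\{3,5\}=5$, and a fortiori the same bound holds for $C_{H_\Gamma}(a)/Z(H)$.

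It follows that within $H_\Gamma$, the elements $x_n^\alpha c$ (all of which lie in $H_\Gamma$, since $\ell_R(x_n Z(H))=0$) are exactly the non-central $a$ with $\dim C_{H_\Gamma}(a)/Z(H_\Gamma)\geq 6$. This condition is $\emptyset$-definable in $(H_\Gamma,\cdot)$. The center is $\emptyset$-definable, and the condition can be written as
\[
a\notin Z \wedge \exists b_1\dots b_6\Big(\bigwedge_i [a,b_i]=e\wedge \bigwedge_{\bar e\neq \bar 0} b_1^{e_1}\cdots b_6^{e_6}\notin Z\Big),
\]
where $\bar e$ ranges over the finitely many tuples in $\{0,\dots,p-1\}^6$.

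The only delicate point is making sure the degree bound $n_0=4$ is computed correctly and that no non-$\mathbb{N}$ vertex has infinitely many neighbors, so that Lemma \ref{L: dim computations}(2) applies uniformly. Everything else is routine.
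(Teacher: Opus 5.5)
Your proof is correct and follows the paper's approach. Part (2) is exactly the paper's argument: Lemma \ref{L: dim computations}(2) with $n_0=4$ bounds every other non-central element by $5$. For part (1), the paper does a pigeonhole on the values $\ell_R(x_{(\{n,m\},0)})\in\{0,1\}$ and, in the second case, takes quotients by a fixed $x_{(\{n,m_0\},0)}$. That is just a hands-on version of your observation that an infinite-dimensional subspace meets the hyperplane $\ker(\ell_R)$ in an infinite-dimensional subspace.
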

\begin{proof}
    (1) Among the countably many values of  $\ell_R(x_{(\{n,m\},0)})\in \{0,1\}$, $n\neq m$, one value occurs infinitely often. 
    
    If $\ell_R(x_{(\{n,m\},0)})=0$ for infinitely many $m$, then we can find distinct $\{m_i\in \mathbb{N}:1\leq i\leq 6\}$ and not equal to $n$ for which $\ell_R(x_{(\{n,m_i\},0)})=0$, for $1\leq i\leq 6$. Hence, $x_{(\{n,m_i\},0)}\in C_{H_\Gamma}(a)$ for $1\leq i\leq 6$ and their images in $H_\Gamma/Z(H)$ are linearly independent.

    If $\ell_R(x_{(\{n,m\},0)})=1$ for infinitely many $m$ then we can find $\{m_i\in \mathbb{N}: 0\leq i\leq 6\}$ distinct and not equal to $n$ for which $\ell_R(x_{(\{n,m_i\},0)})=1$, for $0\leq i\leq 6$. Now  consider $x_{(\{n,m_i\},0)}(x_{(\{n,m_0\},0)})^{-1}$, $1\leq i\leq 6$, and proceed as above.

(2) By Lemma \ref{L: dim computations}(2), for every $a\in H_\Gamma\setminus Z(H)$ that is not of the form $x_n^\alpha c$, $\dim_{\mathbb{F}_p}C_{H_\Gamma}(a)/Z(H)\leq \dim_{\mathbb{F}_p}C_H(a)/Z(H)\leq 5$. Combining this with Part (1), we obtain the desired defining formula. 
\end{proof}

\begin{corollary}
    The countable graph $\Gamma=(\mathbb{N},R)$ is interpretable in $(H_\Gamma,\cdot)$.
\end{corollary}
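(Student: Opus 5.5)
The plan mirrors Corollary \ref{C: up interpret any countable graph}, now working entirely inside $(H_\Gamma,\cdot)$. First I need $Z(H_\Gamma)$ to be definable, which is automatic as the centre of $H_\Gamma$. By Lemma \ref{L:auxiallry going down}(2) it coincides with $Z(H)$. Hence quotienting by it, and the formula $\psi$, make sense in $(H_\Gamma,\cdot)$ without referring to $H$. Next, let $Y$ be the $\emptyset$-definable set from Lemma \ref{L:down vertex n}(2), namely the elements of $H_\Gamma$ of the form $x_n^\alpha c$ with $n\in\mathbb{N}$, $0<\alpha<p$ and $c\in Z(H)$. These elements do lie in $H_\Gamma$, since $x_n\in H_\Gamma$ and $Z(H)\subseteq H_\Gamma$.

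On $Y$ I define $g\approx h$ to hold iff, modulo $Z(H_\Gamma)$, each of $g,h$ is a power of the other. This is a first-order formula, because only the exponents $0<\alpha<p$ need to be considered, giving a finite disjunction. I check that $\approx$ is an equivalence relation. I also check that $x_n^\alpha c\approx x_m^\beta d$ holds iff $n=m$, using that $\{x_vZ(H)\}$ is a basis of $H/Z(H)$ (Fact \ref{F:properties of Mekler group}). Consequently the map $n\mapsto [x_n]_\approx$ is a bijection from $\mathbb{N}$ onto $Y/\approx$.

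Finally, the edge relation is given by the formula $\varphi(x,y)$, evaluated in $(H_\Gamma,\cdot)$. By Lemma \ref{L: down relation}, for $x=x_n^\gamma c$ and $y=x_m^\delta d$ the truth of $\varphi$ depends only on $n$ and $m$, and it holds exactly when $\{n,m\}\in R$. So $\varphi$ is $\approx$-invariant on $Y\times Y$, and it induces $R$ on $Y/\approx$ under the above bijection. This interprets $\Gamma$ in $(H_\Gamma,\cdot)$.

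The main point requiring care has already been handled in Lemma \ref{L:down vertex n}: the vertex-like elements of $H$ need not be definable in the subgroup, so one must define $Y$ intrinsically in $H_\Gamma$ via centralizer dimensions. That dimension count is itself expressible by a first-order formula, as done there. Beyond that, the only check is the routine invariance and well-definedness of $\approx$ and $\varphi$.
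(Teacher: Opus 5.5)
Your proposal is correct and follows the paper's proof. You take $Y$ to be the $\emptyset$-definable set from Lemma \ref{L:down vertex n}(2), quotient it by the same relation $\approx$ as in Corollary \ref{C: up interpret any countable graph}, and read off $R$ via Lemma \ref{L: down relation}, while also spelling out the routine points the paper leaves implicit (that $Z(H)=Z(H_\Gamma)$ makes $\psi$ and $\varphi$ meaningful in $(H_\Gamma,\cdot)$, and that $Y/\approx$ is in bijection with $\mathbb{N}$).
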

\begin{proof}
    By Lemma \ref{L:down vertex n}(2), the set $Y=\{x_n^\alpha c:0<\alpha<p,\, n\in\mathbb{N},\, c\in Z(H)\}$ is definable in $(H_\Gamma,\cdot)$. Let $\approx$ be the equivalence relation as in the proof of Corollary \ref{C: up interpret any countable graph}. Lemma \ref{L: down relation} implies that $\Gamma$ is definable in $Y/\approx$.
\end{proof}

Using \cite[Theorem 5.5.1]{Hodges} (as in the proof of Proposition \ref{P: up}) we conclude:

\begin{proposition}\label{P: down}
   For any first-order countable structure $M$ in a finite language there is a normal subgroup $H_\Gamma\trianglelefteq H$  of index $p$ such that $(H_\Gamma,\cdot)$ interprets $M$.
\end{proposition}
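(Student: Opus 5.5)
The plan is to reduce to the graph case and then apply the preceding corollary. First, I use \cite[Theorem 5.5.1]{Hodges}, as in the proof of Proposition \ref{P: up}: any countable structure $M$ in a finite language is interpretable in some countable graph. Relabelling the vertices by $\mathbb{N}$, we may take this graph to be $\Gamma=(\mathbb{N},R)$. If the graph produced is finite, I first replace it by a countably infinite graph that still interprets $M$, for instance by adjoining infinitely many isolated vertices with a definable marker; equivalently, I apply the theorem so that it directly yields an infinite countable graph.

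Given $\Gamma$, I take the subgroup $H_\Gamma=\pi^{-1}(\ker(\ell_R))$ defined at the start of this section. Its normality and index $p$ are already recorded there, but I would recheck them briefly. The functional $\ell_R$ is nonzero: for instance $\ell_R(x_{(\{a,b\},1)}Z(H))=1$. Hence $\ker(\ell_R)$ has index $p$ in $H/Z(H)$. Since $Z(H)\subseteq H_\Gamma$ and $H/Z(H)$ is abelian, $H_\Gamma$ contains $[H,H]=Z(H)$ and is therefore normal, of index $p$.

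By the preceding corollary, $\Gamma$ is interpretable in $(H_\Gamma,\cdot)$. Interpretations compose, so $M$ is interpretable in $(H_\Gamma,\cdot)$ as well. The parameters involved are harmless, since definability here is understood with parameters throughout.

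The only delicate point is the reduction step. We must make sure that the graph given by \cite[Theorem 5.5.1]{Hodges} is countable, which holds because $M$ is countable and the language is finite. We also need its vertex set to be in bijection with $\mathbb{N}$, so that the construction of $\ell_R$ and the corollary apply verbatim. Everything else is direct citation of Lemma \ref{L:down vertex n}, Lemma \ref{L: down relation} and the corollary.
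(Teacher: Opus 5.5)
Your proposal is correct and follows the paper's route: reduce $M$ to a countable graph $\Gamma=(\mathbb{N},R)$ via Hodges' Theorem 5.5.1, then use the preceding corollary that $\Gamma$ is interpretable in $(H_\Gamma,\cdot)$, and compose the interpretations. Your extra checks are also fine. These are the index $p$ and normality of $H_\Gamma$, and padding a finite graph to vertex set $\mathbb{N}$; the finite case is in fact trivial, since with parameters any finite structure is interpretable in any infinite one.
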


\begin{question}
    Can one find an example with a subgroup of index $2$?    
\end{question}

\appendix
\section{Non-superstability}
Showing that a certain group is model-theoretically tame is usually quite difficult; showing that it is not is sometimes easier. 

There is a non-exhaustive list of results showing that certain familiar groups are not superstable; see \cite{Poi:FreeNSuperstable, MuGiSh,superstablereisdual, CaPaPa}.

Our aim here is to prove that certain groups commensurable with classical groups are not superstable by mimicking Poizat's proof for the free group. Although the proof is different, the results are not new since they follow from \cite{superstablereisdual}.

\begin{definition}\label{D: property Q}
    For a group $G$ and integers $n\geq 2$, $m\geq 1$, define:
    \begin{enumerate}
        \item $A_{n,m}(G)=\{g\in G: \text{$g^n$ has at most $m$ $n$-th roots in $G$}\}$.
        \item $G^n=\{g^n: g \in G\}$.
    \end{enumerate}
    These are definable sets in the group language. We say that a group satisfies property $Q$ if there are integers $n\geq 2$, $m\geq 1$ such that $A_{n,m}(G)$ is a generic\footnote{That is, finitely many translates of it cover $G$.} set in $G$ but $G^n$ is not.
\end{definition}

Property $Q$ is precisely what is needed for Poizat's proof of non-superstability of the free group. The following is essentially his proof and is well known.

\begin{proposition}\label{P:Prop Q is not superstable}
    Any group satisfying property $Q$ is not  superstable.  
\end{proposition}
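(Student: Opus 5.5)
We follow Poizat's argument and build a binary tree of definable sets, each of which splits into infinitely many pairwise disjoint definable pieces of the same kind. This shows that the theory of $G$ is not superstable; equivalently, some type has infinite $U$-rank / Shelah rank $\infty$.

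Fix $n,m$ witnessing property $Q$. Work in a sufficiently saturated model $\mathfrak{G}\succeq G$. Both conditions transfer to $\mathfrak{G}$, since genericity of a definable set is first order:
\begin{itemize}
\item $A_{n,m}(\mathfrak{G})$ is covered by finitely many translates;
\item for every $k$, no $k$ translates of $\mathfrak{G}^n$ cover $\mathfrak{G}$.
\end{itemize}

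The basic observation concerns the map $g\mapsto g^n$ restricted to $A=A_{n,m}$. It is at most $m$-to-one, and its image is contained in $\mathfrak{G}^n$. Since $\mathfrak{G}^n$ is non-generic, its complement is generic. By compactness and saturation we can find infinitely many, in fact $\aleph_0$ many, elements $c_i$ whose cosets $c_i\mathfrak{G}^n$ are pairwise disjoint. To see this, a non-generic definable set $X$ has the property that for any finite set $F$ there is $c$ with $cX\cap FX=\emptyset$. This uses $X^{-1}X$ non-generic, which holds since $\mathfrak{G}^n$ is closed under inverses.

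I would actually phrase the argument via ranks. Suppose $T$ is superstable. Then every definable set has ordinal-valued Shelah $\infty$-rank $R$ (or Lascar rank for types). In a superstable group, generic definable sets are exactly those of maximal rank. Since $A$ is generic, $R(A)=R(G)$. The map $g\mapsto g^n$ on $A$ is finite-to-one, so it preserves rank. Hence $R(\{g^n:g\in A\})=R(A)=R(G)$. Therefore the image, a subset of $G^n$, has maximal rank. Thus $G^n$ is generic, a contradiction.

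The key steps, in order, are:
\begin{enumerate}
\item Recall that in superstable groups there is a good rank, say Lascar $U$-rank on types, or Shelah's $R^\infty$ on formulas. This rank is invariant under definable bijections and translations.
\item Show that it is preserved under definable finite-to-one maps, with uniform fibre bound $m$.
\item Show that a definable set is generic iff it contains a generic type, i.e.\ a type of maximal rank. Here I use that in stable groups generic types exist, and that a formula is generic iff it lies in some generic type.
\end{enumerate}
With these, take a generic type $p$ containing $A$ and a realization $a$. Then $U(a^n)=U(a)$, because $a\in\acl(a^n)$ (there are at most $m$ $n$-th roots) and $a^n\in\operatorname{dcl}(a)$. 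So $\tp(a^n)$ has maximal $U$-rank. Since $U(G)$ may be infinite, I would use the superstable fact that a type is generic iff its $U$-rank equals $U(G)$ in the sense of ordinal rank of generics. This works even when $U$ is not finite, because in superstable groups all generics have the same $U$-rank and conversely a type of that rank is generic (Lascar inequalities). Then $G^n\in\tp(a^n)$ is generic, a contradiction.

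The main obstacle is the last equivalence: ``maximal $U$-rank implies generic'' in a superstable group of possibly infinite rank. I would justify it with the Lascar inequality $U(ba)\ge U(b/a)$ and with $U(b\cdot a/ b)=U(a/b)$. If $q=\tp(c)$ has rank $U(G)$ and $b$ is generic independent from $c$, then $b c^{-1}$ relates the two. Alternatively I would cite the standard result (Poizat, \emph{Stable groups}) that in a superstable group a definable set is generic iff its $U$-rank (or $R^\infty$) equals that of $G$.
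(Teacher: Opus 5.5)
Your rank argument is the paper's proof: realize a generic type concentrating on $A_{n,m}$ by $a$, note $a\in\acl(a^n)$ and $a^n\in\mathrm{dcl}(a)$, so $U(a^n)=U(a)=U(G)$, hence $\tp(a^n)$ is generic while containing the non-generic set $G^n$. The paper cites Poizat's Lemma 5.16 for the step you obtain from the $U$-rank characterization of generics, which is a one-liner: if $U(c/M)=U(G)$ and $g\forkindep[M] c$, then $U(gc/Mg)=U(c/M)\ge U(gc/M)$, so $gc\forkindep[M] g$.

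Drop the opening paragraph on disjoint cosets. Its claim that $X^{-1}X$ is non-generic because $X=X^{-1}$ is false in general (take $X$ the union of the two axes in $(K^2,+)$ with $K$ algebraically closed), and your final argument never uses it.
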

\begin{proof}
    Assume, toward a contradiction, that $G$ is superstable and satisfies property $Q$.  As $G^n$ is not generic, $G\setminus G^n$ must be a generic definable set by stability.

    By stability theory, each generic set contains a generic type. Let $\tp(a)$, for $a\in \mathbb{G}\succ G$ in a sufficiently saturated extension, be a generic type concentrated on the definable set $A_{n,m}(G)$. Since $G$ is assumed to be superstable and $a\in \acl(a^n)$, $\tp(a^n)$ is also a generic type \cite[Lemma 5.16]{Poi:StableGroups}. Since $\tp(a^n)$ can contain only generic definable sets it must concentrate on $G\setminus G^n$, contradiction.
\end{proof}

The proposition brings the properties from Definition \ref{D: property Q} to the forefront. The following are easy observations. 

\begin{lemma}\label{L:basic property Q}
    Let $G$ be a group and $H\leq G$ a finite-index subgroup.
    \begin{enumerate}
        \item If $H^n$ is generic in $H$ then $G^n$ is generic in $G$.
        \item If $G\subseteq A_{n,m}(G)$ for $n\geq 2,\, m\geq 1$ then $H\subseteq A_{n,m}(H)$.
        \item If $H\trianglelefteq G$ and for some prime number $p>|G/H|$, $H\subseteq A_{p,m}(H)$  then $A_{p,m}(G)$ is generic in $G$.
    \end{enumerate}
\end{lemma}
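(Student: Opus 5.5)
Each of the three parts is an elementary counting argument with coset representatives; let $k=[G:H]$.

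For (1), assume $H^n$ is generic in $H$, so there are $h_1,\dots,h_r\in H$ with $H=\bigcup_i h_iH^n$. Since $H^n\subseteq G^n$, we get $H\subseteq\bigcup_i h_iG^n$. Choosing left coset representatives $g_1,\dots,g_k$ of $H$ in $G$ gives
\[G=\bigcup_{j,i} g_jh_iG^n,\]
so $G^n$ is generic in $G$.

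For (2), take $h\in H$. We know $h\in A_{n,m}(G)$, so $h^n$ has at most $m$ $n$-th roots in $G$. Every $n$-th root of $h^n$ lying in $H$ is in particular one in $G$, so $h^n$ has at most $m$ $n$-th roots in $H$. Hence $h\in A_{n,m}(H)$.

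For (3), it suffices to prove $H\subseteq A_{p,m}(G)$; then the $k$ translates of $H$, hence of $A_{p,m}(G)$, cover $G$. Fix $h\in H$ and let $g\in G$ with $g^p=h^p$. The image of $g$ in $G/H$ has order dividing $p$. Its order also divides $|G/H|<p$, and $p$ is prime, so the image is trivial and $g\in H$. Thus every $p$-th root of $h^p$ in $G$ already lies in $H$. By hypothesis there are at most $m$ of them, so $h\in A_{p,m}(G)$.

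There is no real obstacle here. The only point needing care is the order argument in (3), which is where normality of $H$ is used, so that $G/H$ is a group.
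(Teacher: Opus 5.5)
Your proposal is correct and follows the paper's proof. Parts (1) and (2) are the routine observations the paper calls straightforward. For (3) you use the same key fact: when $H\trianglelefteq G$ and $p>|G/H|$, $x^p\in H$ forces $x\in H$. This gives $H\subseteq A_{p,m}(G)$, and the coset translates of $H$ then cover $G$.
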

\begin{proof}
    (1,2) Straightforward.    
    (3) Since $H\trianglelefteq G$ and $p>|G/H|$, for $x\in G$, if $x^p\in H$ then $x\in H$; we conclude that $A_{p,m}(H)\subseteq A_{p,m}(G)$. Hence, $G=\bigcup_i t_i H\subseteq \bigcup_i t_iA_{p,m}(G)$.
\end{proof}

Our aim is to highlight several group-theoretic properties allowing to  transfer such properties between commensurable groups.\footnote{Two group $G_1$ and $G_2$ are commensurable if they have isomorphic finite-index subgroups.}

\begin{fact}\label{F:important facts}
    Consider the following properties for an infinite group $G$:
    \begin{list}{$\bullet$}{}
        \item $G$ is finitely generated.
        \item $G$ is linear (i.e. there is a faithful representation of $G$ into $GL_n(F)$ for some field $F$).
        \item $G$ is virtually solvable (equivalently, solvable-by-finite).
    \end{list}
    Then
    \begin{enumerate}
        \item Finite generation is preserved under commensurability.  
        \item Linearity is preserved under commensurability.
        \item Virtual solvability is preserved under commensurability.
        \item Tits alternative: Assuming $G$ is finitely generated and linear, virtual solvability is equivalent to the nonexistence of a non-abelian free subgroup of $G$.
        \item If $G$ is finitely generated, linear and not virtually solvable then $G^n$ is not generic in $G$ for all $n\geq 2$.
    \end{enumerate}
\end{fact}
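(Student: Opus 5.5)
Parts (1)--(3) are standard facts about finite-index subgroups; (4) is the Tits alternative; (5) is the substantive claim. I plan to prove them in that order, spending most of the effort on (5).

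For (1), a finite-index subgroup of a finitely generated group is finitely generated by Schreier's lemma. Conversely, if $H\le G$ has finite index and $H$ is finitely generated, then $G$ is generated by the generators of $H$ together with a transversal. Applying both directions through a common finite-index subgroup handles commensurability. The same transversal argument gives (3): solvability passes to subgroups, and a finite-index subgroup of a finite-index subgroup has finite index.

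For (2), linearity passes to subgroups. For the other direction, if $H\le G$ has index $k$ and $\rho:H\to GL_n(F)$ is faithful, then the induced representation $\mathrm{Ind}_H^G\rho:G\to GL_{nk}(F)$ is faithful. Its kernel is the normal core of $\ker\rho$, which is trivial. Item (4) is exactly the Tits alternative, which I cite rather than prove. The only care needed is the standard fact that a virtually solvable group contains no non-abelian free subgroup.

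For (5), I take $G$ finitely generated, linear and not virtually solvable, and suppose toward contradiction that $G=\bigcup_{i\le k} g_iG^n$. I would use the following known consequence of the Tits alternative machinery. Such a $G$ has a finite-index subgroup $G_0$ and an epimorphism-like structure: via strong approximation (Weisfeiler, Pink) or Larsen--Lubotzky, $G$ has infinitely many finite quotients $G\to Q_j$ of Lie type whose ranks or fields grow. It would then suffice to find finite quotients $Q$ of $G$ in which the proportion of $n$-th powers tends to $0$, uniformly enough that $k$ translates cannot cover $Q$.

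The image of $\bigcup g_iG^n$ in $Q$ is $\bigcup \bar g_i Q^n$. Hence $|Q|\le k\,|Q^n|$, so I need $|Q^n|/|Q|\to 0$ along some sequence. For finite groups of Lie type $L(q)$ with $n$ dividing $q-1$, a positive proportion of elements lie in maximal tori, and in a split torus the $n$-th powers have index about $n^{r}$. So the density of $n$-th powers is bounded by roughly $n^{-r}$ plus the contribution of nonregular elements. I would pass to a finite-index subgroup, which is harmless by Lemma \ref{L:basic property Q}(1) read contrapositively. Then, by choosing quotients of growing rank, or by varying over several tori types, I want the density to go to $0$.

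The main obstacle is this last density estimate, and getting unbounded rank rather than a bounded proportion. If the rank stays bounded, as for $SL_2$, the density of squares in $PSL_2(q)$ stays roughly constant, not tending to zero. So the finite-quotient approach alone may fail.

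The fallback I would try first is Poizat-style. By (4), $G$ contains a free subgroup $F_2$, and I would use ping-pong dynamics in a local field obtained from the linear representation. Pick a proximal element $\gamma$, as in Tits's proof, acting on projective space with attracting and repelling points. I would then argue that for any finite set $g_1,\dots,g_k$, some element of the form $\gamma^N w$ avoids all the $g_iG^n$. The idea is that being an $n$-th power forces eigenvalue conditions, namely that the top eigenvalue is an $n$-th power in a finite extension, and these conditions can be violated by suitable choices. This is the step I am least sure can be completed cleanly.
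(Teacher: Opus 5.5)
Your arguments for (1)--(4) are fine. For (2), the induced representation $\mathrm{Ind}_H^G\rho$, whose kernel is the normal core of $\ker\rho$, is a self-contained substitute for the paper's citation of Wehrfritz. The genuine gap is (5). The paper does not prove it but quotes Theorem B of Hrushovski--Kropholler--Lubotzky--Shalev \cite{HrKrLuSh:Powers}, and your proposal neither cites that result nor reaches a proof.

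You correctly see that a single quotient of bounded rank only keeps the density of $n$-th powers away from $1$, not near $0$ (squares have density about $3/4$ in $\mathrm{PSL}_2(q)$). But you then abandon that route, and the ping-pong fallback is not an argument. The condition that the top eigenvalue be ``an $n$-th power in a finite extension'' is vacuous, since every field element has $n$-th roots in a finite extension. Nothing in the sketch explains why $g_i^{-1}\gamma^N w$ should fail to be an $n$-th power for every $i$.

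There is also a logical slip. Lemma \ref{L:basic property Q}(1), read contrapositively, transfers non-genericity of $n$-th powers \emph{down} from $G$ to a finite-index subgroup. It does not let you conclude anything about $G$ from non-genericity in a finite-index $G_0\le G$. That would need ``$G^n$ generic $\Rightarrow$ $G_0^n$ generic'', which is the converse and is not available. So a quotient argument must be run on finite quotients of $G$ itself, where $G/G_0$ acts on the quotients of $G_0$ and the $n$-th powers of elements from nontrivial cosets must also be controlled.

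The idea missing from your quotient approach is that the rank need not grow; what you need is many independent quotients at once. Reduce to a finite-index $G_0$ whose Zariski closure maps onto a simple algebraic group $\mathbf{G}$ (nontrivial because $G$ is not virtually solvable). Strong approximation (Nori, Weisfeiler, Pink) then gives, for every $s$, a \emph{simultaneous} surjection $G_0\to Q_1\times\dots\times Q_s$ onto finite groups of Lie type of the type of $\mathbf{G}$ over finite fields $\mathbb{F}_{q_j}$. At least in characteristic $0$, Chebotarev lets you choose the $q_j$ with $n\mid q_j-1$ and $\mathbf{G}$ split.

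Since $(Q_1\times\dots\times Q_s)^n=Q_1^n\times\dots\times Q_s^n$, a uniform bound $|Q_j^n|\le c\,|Q_j|$ with $c<1$ gives density at most $c^s$. This contradicts $|Q|\le k\,|Q^n|$ once $c^s<1/k$. The uniform $c<1$ is exactly what your torus count supplies:
\begin{itemize}
\item roughly a proportion $1/|W|$ of elements are regular semisimple in a split torus $T$;
\item any $n$-th root of such an element lies in its centralizer, which is essentially $T$;
\item $T^n$ has index about $n^r$ in $T$, with $r$ the rank.
\end{itemize}
So even bounded-rank factors like $\mathrm{PSL}_2(q_j)$ suffice. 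Making all this rigorous (the reductions, the coset issue above, the positive-characteristic case) is the substance of the cited theorem. For the purposes of this paper you should simply cite it, as the authors do.
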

\begin{proof}
    (1) This is classical. 
    (2) Any subgroup of a linear group is linear. For the other direction, use \cite[Lemma 2.3]{We:lineargroups}.
    (3) By definition.
    (4) \cite{Ti:alternative}.
    (5) \cite[Theorem B]{HrKrLuSh:Powers}.
\end{proof}

\begin{definition}
    A group is said to have unique root extraction if  $G=A_{n,1}(G)$ for all $n\geq 2$.
\end{definition}
\begin{remark}
    Every biorderable group has unique root extraction.
\end{remark}

\begin{proposition}
    Let $G$ be a finitely generated, linear group which is not virtually solvable. If $G$ has unique root extraction, then every group commensurable with $G$ has property $Q$ and hence is not superstable.
\end{proposition}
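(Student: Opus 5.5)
Let $G'$ be any group commensurable with $G$. Then there are finite-index subgroups $H_1\leq G$ and $H_2\leq G'$ with $H_1\cong H_2$. The plan is to transfer the two halves of property $Q$ separately: non-genericity of $n$-th powers will come from Fact \ref{F:important facts}, and genericity of $A_{p,m}$ will come from Lemma \ref{L:basic property Q}. To apply part (3) of that lemma we need a normal subgroup, so we first shrink. Replace $H_2$ by its normal core $N=\bigcap_{g\in G'}gH_2g^{-1}$. This is normal and of finite index in $G'$, and it is contained in $H_2$. Via the isomorphism $H_2\cong H_1$, $N$ corresponds to a finite-index subgroup $N_1\leq H_1\leq G$. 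So $N$ is isomorphic to a finite-index subgroup of $G$ and is normal of finite index in $G'$.

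\textbf{Genericity of $A_{p,m}(G')$.} Since $G$ has unique root extraction, $G=A_{n,1}(G)$ for every $n\geq 2$. By Lemma \ref{L:basic property Q}(2), applied to $N_1\leq G$, we get $N_1=A_{n,1}(N_1)$. Being an abstract group property, this transfers to $N$, so $N=A_{n,1}(N)$ for all $n$. Choose a prime $p>[G':N]$. Lemma \ref{L:basic property Q}(3) with $m=1$ then gives that $A_{p,1}(G')$ is generic in $G'$.

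\textbf{Non-genericity of $G'^p$.} By Fact \ref{F:important facts}(1)--(3), $G'$ is finitely generated, linear, and not virtually solvable, since each of these properties is preserved under commensurability; for the last one, preservation of virtual solvability shows that $G'$ cannot be virtually solvable. Fact \ref{F:important facts}(5) then gives that $G'^n$ is not generic in $G'$ for all $n\geq 2$, in particular for $n=p$. Hence $G'$ has property $Q$ with $n=p$ and $m=1$, and Proposition \ref{P:Prop Q is not superstable} shows that $G'$ is not superstable.

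\textbf{Main obstacle.} The delicate step is the passage to a \emph{normal} finite-index subgroup of $G'$ that is still isomorphic to a finite-index subgroup of $G$. Taking the normal core handles this, because subgroups of $H_2$ correspond to subgroups of $H_1$ under the isomorphism. The only other care needed is that $p$ be chosen after $N$, so that $p>[G':N]$ as Lemma \ref{L:basic property Q}(3) requires.
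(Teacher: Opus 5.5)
Your proposal is correct and takes essentially the same route as the paper. Non-genericity of $G'^{n}$ comes from Fact \ref{F:important facts}(1)--(5), and genericity of $A_{p,1}(G')$ comes from passing to the normal core of the common finite-index subgroup, which inherits unique root extraction, then applying Lemma \ref{L:basic property Q}(3) with a prime $p>[G':N]$. Your explicit tracking of the isomorphism $H_1\cong H_2$ only makes precise what the paper does implicitly by treating the common subgroup as a subgroup of both groups.
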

\begin{proof}
    Let $\widetilde G$ be commensurable with $G$ and let $H$ be a common finite-index subgroup. By Fact \ref{F:important facts}, $\widetilde G$ is also finitely generated, linear and not virtually solvable hence $\widetilde G^n$ is not generic in $\widetilde G$ for every $n$.

    Since $G$ has unique root extraction, so does $H$. The normal core $\mathrm{Core}_{\widetilde G}(H)$ is a finite-index subgroup of $H$ which is normal in $\widetilde{G}$. It thus also has unique root extraction, and by Lemma \ref{L:basic property Q}(3), $A_{p,1}(\widetilde G)$ is generic in $\widetilde G$   for every sufficiently large prime $p$. It follows that $\widetilde G$ satisfies property $Q$ and that $\widetilde G$ is not superstable by Proposition \ref{P:Prop Q is not superstable}.
\end{proof}
\begin{remark}
    The fact that $G$ satisfies unique root extraction (i.e. $G=A_{n,1}(G)$ for all $n$) was used in order to propagate the result to commensurable groups. If instead of unique root extraction, $G=A_{n,1}(G)$ for some $n\geq 2$ (and $G^n$ is not generic in $G$) then $G$ is not superstable.
\end{remark}

\begin{corollary}
    Any group which is commensurable with one of the following is not superstable:
    \begin{enumerate}
        \item Non-abelian right-angled Artin groups.
        \item Surface groups $\pi_1(\Sigma_g)$ for $g\geq 2$.
        \item Pure braid groups $P_n$ for $n\geq 3$.
    \end{enumerate}
\end{corollary}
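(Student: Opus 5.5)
The plan is to apply the preceding proposition: for each of the three classes it suffices to check that the group $G$ in question is finitely generated, linear, not virtually solvable, and has unique root extraction. Commensurability is then handled by the proposition itself, which also gives non-superstability. For unique root extraction I will use the remark that biorderable groups have it. So in each case the task reduces to showing that the group is finitely generated, linear, biorderable, and contains a non-abelian free subgroup. By Fact \ref{F:important facts}(4), the last condition is equivalent to not being virtually solvable, given finite generation and linearity.

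\emph{Right-angled Artin groups.} A RAAG $A(\Delta)$ is finitely generated when $\Delta$ is finite, which I take to be the intended setting. It is linear, being a subgroup of a right-angled Coxeter group, which is linear via the Tits representation; alternatively it embeds in $\mathrm{SL}_n(\mathbb{Z})$. It is biorderable, since the lower central series is separating with free abelian quotients (Duchamp--Krob). If $A(\Delta)$ is non-abelian, there are two non-adjacent vertices $u,v$, and $\langle x_u,x_v\rangle$ is free of rank $2$. This follows from the retraction of $A(\Delta)$ onto the RAAG of the induced subgraph on $\{u,v\}$.

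\emph{Surface groups.} For $g\ge 2$, $\pi_1(\Sigma_g)$ is finitely generated. It is linear as a discrete (Fuchsian) subgroup of $\mathrm{PSL}_2(\mathbb{R})$, and it lifts to $\mathrm{SL}_2(\mathbb{R})$. It is biorderable because it is residually torsion-free nilpotent (Baumslag), which again gives an ordering from the lower central series. It contains free subgroups of rank $2$, for instance the fundamental group of an infinite-index subsurface, or via ping-pong with hyperbolic elements.

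\emph{Pure braid groups.} $P_n$ is finitely generated and linear, the latter by Bigelow--Krammer linearity of $B_n$. It is biorderable by Kim--Rolfsen, or via the iterated semidirect product decomposition $P_n=F_{n-1}\rtimes P_{n-1}$ with the action trivial on the abelianization. For $n\ge 3$ it contains the free group $F_2$ as the fibre of the forgetful map $P_3\to P_2$.

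The main obstacle is that none of these facts is proved in the paper: biorderability and linearity are deep external results and must be cited precisely. The one subtle point is unique root extraction. Biorderability gives it, because $x^n=y^n$ with $x<y$ would force $x^n<y^n$. So the citations to biorderability carry the argument, and I would check that each cited result gives a genuine bi-ordering (invariant on both sides), not merely a left-ordering.
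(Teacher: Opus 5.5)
Your proposal is correct and follows the paper's proof: in each case you verify the hypotheses of the preceding proposition (finite generation, linearity, not virtually solvable, and unique root extraction obtained from biorderability), and you differ only in which external sources you cite. The one small variation is that you rule out virtual solvability by exhibiting an explicit copy of $F_2$. That step uses only the trivial direction of the Tits alternative, since a subgroup of a virtually solvable group is virtually solvable. The paper instead cites external results and invokes the Tits alternative.
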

\begin{proof}
    (1) Right-angled Artin groups are finitely generated by definition, linear \cite[Corollary 3.6]{HsWi:linear-graph}, not virtually solvable by \cite[Theorem 1.2]{Ba:semifree} and Tits alternative, and have unique root extraction because they are biorderable as a graph product of copies of $\mathbb{Z}$ \cite{Ch:graphproductsorder}.

    (2)  Surface groups are finitely generated by definition, it is classical that they are linear (there are even many faithful representations \cite{DeKe:surfacelinear}), not virtually solvable by \cite[Second footnote]{Pu:commutator} and Tits alternative, and have unique root extraction because they are biorderable \cite[Theorem 3.11]{ClRo:order}.

    (3) Pure braid groups are finitely generated \cite[Corollary 1.19]{KaTu:Braid}, linear \cite[Theorem 3.18]{KaTu:Braid}, not virtually solvable \cite[Theorem 1.16]{KaTu:Braid} and Tits alternative, and have unique root extraction because they are biorderable \cite[Theorem 7.8]{KaTu:Braid}.
\end{proof}

\bibliographystyle{alpha}
\bibliography{stabgroups}

\end{document}